\documentclass{amsproc}
\usepackage[margin=1.2in,nomarginpar]{geometry}
\usepackage{amssymb}
\usepackage{amsmath}
\usepackage{mathdots}
\usepackage{amsbsy}
\usepackage{amscd}
\usepackage{amsthm}

\usepackage{url}
\usepackage{xcolor}
\usepackage{amsmath,amssymb,amsthm,amsfonts,longtable}
\usepackage[english]{babel}
\usepackage{tikz-cd}
\usetikzlibrary{cd}
\usetikzlibrary{decorations.markings}
\tikzset{negated/.style={
		decoration={markings,
			mark= at position 0.5 with {
				\node[transform shape] (tempnode) {$\times$};
			}
		},
		postaction={decorate}
	}
}
\usepackage{array}
\usepackage[colorlinks,citecolor=red,urlcolor=blue,bookmarks=false,hypertexnames=true]{hyperref} 
\usepackage{multirow}
\usepackage{pbox}
\newtheorem{theorem}{Theorem}
\newtheorem{corollary}[theorem]{Corollary}
\newtheorem{lemma}[theorem]{Lemma}

\newtheorem{definition}[theorem]{Definition}
\newcommand{\Irr}{\textnormal{Irr}}

\newcommand{\cl}{\textnormal{cl}}
\newcommand{\nil}{\textnormal{c}}
\newcommand{\cd}{\textnormal{cd}}
\newcommand{\nl}{\textnormal{nl}}
\newcommand{\lin}{\textnormal{lin}}

\title[]{Classification of GVZ and Nested GVZ $p$-groups up to Order $p^6$}

\author{Ram Karan Choudhary$^*$}
\address{Indian Institute of Science Education and Research Pune, Dr.~Homi Bhabha Road, Pashan, Pune--411008, India}
\email{ramkchoudhary1997@gmail.com, ram.choudhary@iiserpune.ac.in}

\thanks{$^{\textbf{*}}$ Corresponding author.
}
\subjclass[2020]{primary 20D15; secondary 20D25, 20C15}
\keywords{GVZ-groups, nested groups, $p$-groups}
\begin{document}
	
	\begin{abstract}
	Let $G$ be a finite group and let $\Irr(G)$ denote the set of irreducible complex characters of $G$. For a normal subgroup $N \trianglelefteq G$ and $\chi \in \Irr(G)$, we say that $\chi$ is \emph{fully ramified} over $N$ if $\chi(g)=0$ for all $g \in G \setminus N$. A group $G$ is said to be of \emph{central type} if there exists $\chi \in \Irr(G)$ that is fully ramified over $Z(G)$. Motivated by this notion, an irreducible character $\chi \in \Irr(G)$ is called of \emph{central type} if $\chi$ vanishes on $G \setminus Z(\chi)$, where
	\[
	Z(\chi)=\{\, g \in G : |\chi(g)|=\chi(1) \,\}
	\]
	is the center of $\chi$. Groups in which every irreducible character is of central type are called \emph{GVZ-groups}. Furthermore, a group $G$ is said to be \emph{nested} if for all $\chi,\psi \in \Irr(G)$, either $Z(\chi)\subseteq Z(\psi)$ or $Z(\psi)\subseteq Z(\chi)$.
	
	It is known that a GVZ-group is nilpotent. In this article, we classify all GVZ and nested GVZ $p$-groups of order at most $p^6$, where $p$ is an odd prime.
	\end{abstract}
	\maketitle

	\section{Introduction} 
	For a finite group $G$, we write $\Irr(G)$ for the set of irreducible complex characters of $G$. Let $N \trianglelefteq G$ and $\chi \in \Irr(G)$. We say that $\chi$ is \emph{fully ramified} over $N$ if $\chi(g)=0$ for all $g \in G \setminus N$. A group $G$ is called of \emph{central type} if there exists $\chi \in \Irr(G)$ that is fully ramified over $Z(G)$. This class of groups was introduced by DeMeyer and Janusz~\cite{DJ}, who proved that $G$ is of central type if and only if each Sylow $p$-subgroup $H_p$ of $G$ is of central type and $Z(H_p)=Z(G)\cap H_p$. It is also known that groups of central type are solvable (see~\cite{HI}). These groups have been studied extensively in~\cite{DJ, Espu, Gagola, HI}.
	
	Motivated by this notion, a character $\chi \in \Irr(G)$ is said to be of \emph{central type} if it vanishes on $G \setminus Z(\chi)$, where
	\[
	Z(\chi)=\{g \in G : |\chi(g)|=\chi(1)\}
	\]
	is the center (or quasi-kernel) of $\chi$. Equivalently, $\chi$ is of central type if and only if $\bar{\chi} \in \Irr(G/\ker(\chi))$ is fully ramified over $Z(G/\ker(\chi))$, where $\chi$ is the lift of $\bar{\chi}$ to $G$. In particular, $G/\ker(\chi)$ is a group of central type with faithful character $\bar{\chi}$ (see~\cite{LewisGVZ2}).
	
	Groups in which every irreducible complex character is of central type are called \emph{GVZ-groups}. These were first studied in~\cite{Onogroup} under the name \emph{groups of Ono type}. A conjugacy class $\mathcal{C}$ of $G$ is said to be of \emph{Ono type} if for every $\chi \in \Irr(G)$ and every $g \in \mathcal{C}$, either $\chi(g)=0$ or $|\chi(g)|=\chi(1)$. The group $G$ is of Ono type if all its conjugacy classes satisfy this condition. Such groups were first introduced by Ono~\cite{Ono}.
	
	A group $G$ is called \emph{nested} if for all $\chi, \psi \in \Irr(G)$, either $Z(\chi) \subseteq Z(\psi)$ or $Z(\psi) \subseteq Z(\chi)$. A GVZ-group $G$ is called a \emph{nested GVZ-group} if it is nested. In this case, $Z(\psi) \subseteq Z(\chi)$ whenever $\chi(1) \leq \psi(1)$ for $\chi, \psi \in \Irr(G)$ (see~\cite[Lemma 7.1]{LewisGVZ2}), and moreover,
	\[
	Z(\psi) \subset Z(\chi) \quad \Longleftrightarrow \quad \chi(1) < \psi(1), \quad \text{for } \chi, \psi \in \Irr(G).
	\]
	A group $G$ has an irreducible character $\chi$ with $\chi(1)=|G/Z(\chi)|^{\frac{1}{2}}$ if and only if $\chi(g)=0$ for all $g \in G \setminus Z(\chi)$ (see~\cite[Corollary 2.30]{I}). Hence, this formulation of nested GVZ-groups coincides with that used by Nenciu in~\cite{NenciuGVZ, NenciuGVZ2}, motivated by problems of Berkovich~\cite{Berkovich}. It is well known that every GVZ-group is nilpotent. Moreover, any nested GVZ-group is nilpotent and decomposes as the direct product of a nested GVZ $p$-group and an abelian group (see~\cite[Corollary~2.5]{NenciuGVZ}). A group $G$ is called \emph{flat} if 
	$|\cl_G(g)| = |\langle [g, x] : x \in G\rangle|$ holds for every element $g \in G$, where $\cl_G(g)$ represents the conjugacy class of $g$ in $G$. It is noteworthy that a group $G$ is flat if and only if it is a GVZ-group (see~\cite[Theorem~A]{Burkett}). For further background on these classes of groups, see~\cite{Burkett2, LewisGVZ, LewisGVZ3, Burkett, Ram, Ram6, FM, MLL, LewisGVZ2, NenciuGVZ, NenciuGVZ2}.
	
	A non-abelian group $G$ is called a \emph{VZ-group} if every $\chi \in \nl(G)$ is fully ramified over $Z(G)$. Such groups form a subclass of nested GVZ-groups and have nilpotency class $2$ (see~\cite{FM}). In fact, every nilpotent group of class $2$ is a GVZ-group (see~\cite[Theorem~2.31]{I}), and hence any nested group of class $2$ is a nested GVZ-group. In particular, all two-generator $p$-groups of class $2$ are nested GVZ $p$-groups (see~\cite{Nenciu2generators}), although they need not be VZ $p$-groups. There also exist nested GVZ $p$-groups of arbitrarily large nilpotency class. For each $n \geq 1$, Nenciu~\cite{NenciuGVZ2} constructed a family of nested GVZ $p$-groups of order $p^{2n+1}$, exponent $p$, and class $n+1$, where $p>n+1$ is prime. Similarly, for each $n \geq 1$, Lewis~\cite{LewisGVZ2} constructed such groups of exponent $p^{\,n+1}$ for odd primes $p$. In this article, we classify all GVZ $p$-groups (respectively, nested GVZ $p$-groups) of order at most $p^6$, where $p$ is an odd prime.
	
	In an earlier work with Prajapati~\cite{Ram6}, we showed that the properties of being a GVZ-group and a nested GVZ-group are invariant under isoclinism (the notion is introduced in Section~\ref{sec:preliminaries}).
	
	\begin{theorem}\cite[Theorem~4]{Ram6}\label{thm:isoGVZ}
		Let $G$ and $H$ be finite isoclinic groups. If $G$ is a GVZ-group (respectively, a nested GVZ-group), then so is $H$.
	\end{theorem}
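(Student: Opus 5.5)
I will use the flatness characterization \cite[Theorem~A]{Burkett}: $G$ is a GVZ-group if and only if $|\cl_G(g)| = |\langle [g,x] : x \in G\rangle|$ for every $g \in G$. Both sides of this equality can be read off from the commutator map. Fix an isoclinism $(\alpha,\beta)$ with $\alpha: G/Z(G) \to H/Z(H)$ and $\beta: G' \to H'$ satisfying $\beta([g_1,g_2]) = [h_1,h_2]$ whenever $h_i Z(H) = \alpha(g_i Z(G))$.

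For $g \in G$, choose $h \in H$ with $hZ(H) = \alpha(gZ(G))$. The map $x \mapsto [g,x]$ depends only on $xZ(G)$, and $\beta$ carries $\{[g,x] : x \in G\}$ bijectively onto $\{[h,y] : y \in H\}$. Since $|\cl_G(g)| = |\{g^{-1}g^x\}| = |\{[g,x] : x\in G\}|$, we get $|\cl_G(g)| = |\cl_H(h)|$. Because $\beta$ is a group isomorphism, it also maps $\langle [g,x]\rangle$ onto $\langle [h,y]\rangle$, so these subgroups have the same order. Every element of $H$ has the form $hz$ with $z \in Z(H)$, and both quantities are unchanged when $h$ is multiplied by a central element. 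Hence flatness of $G$ transfers to $H$.

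For the nested statement I would first turn nestedness into a statement about commutators. For GVZ-groups, $Z(\chi)/\ker\chi = Z(G/\ker\chi)$, and the standard description (Nenciu, Lewis) shows that nestedness of a GVZ-group is equivalent to the centers $Z(\chi)$ forming a chain. The plan is to recover each $Z(\chi)$ from the normal subgroups $\ker\chi \cap G'$ of $G'$. For $N \le G'$ normal in $G$, set $Z_N = \{g : [g,G] \subseteq N\}$. Since $\chi$ is fully ramified over $Z(\chi)$, one has $Z(\chi) = Z_{\ker\chi}$, and in fact $Z(\chi) = Z_{\ker\chi\cap G'}$ because $[g,G] \subseteq G'$.

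It follows that the family $\{Z(\chi)/Z(G)\}$ is determined by the family of subgroups $\{\ker\chi\cap G'\}$ together with the commutator map. Under an isoclinism, $\alpha(Z_N/Z(G)) = Z_{\beta(N)}/Z(H)$. So it remains to show that $\beta$ carries the family $\{\ker\chi \cap G' : \chi\in\Irr(G)\}$ onto the corresponding family for $H$. Note that $\{\ker\chi\cap G'\}$ is exactly the set of $N$ such that $G'/N$ is cyclic-like under the action, namely $N = \ker\lambda$ for $\lambda \in \Irr(G')$ $G$-invariant in a suitable sense. Since conjugation by $G$ on $G'$ factors through the commutator map, $\beta$ respects it.

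Once this is in place, the chain condition transfers, and inclusions are preserved because $\alpha$ is a lattice isomorphism on subgroups containing the center. The main obstacle is precisely this identification of the kernel family in terms of isoclinism invariants. I would first try to prove that $N \le G'$ equals $\ker\chi\cap G'$ for some $\chi$ if and only if $N$ is normal in $G$ and $G'/N$ embeds in a cyclic group modulo $[Z_N,G]$-type data. Failing that, I would use the fact that the nested condition only needs the set of $Z_N$ for normal $N \le G'$ with $G'/N$ having a faithful $G$-invariant irreducible constituent, a condition expressible purely through $\beta$.
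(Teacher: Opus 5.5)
Your GVZ half is correct. The isomorphism $\beta$ maps $\{[g,x]:x\in G\}$ bijectively onto $\{[h,y]:y\in H\}$ and maps $\langle [g,x]\rangle$ onto $\langle [h,y]\rangle$, so flatness transfers, and multiplying $h$ by a central element changes nothing. Your reductions in the nested half are also correct: $Z(\chi)=Z_{\ker\chi\cap G'}$ and $\alpha(Z_N/Z(G))=Z_{\beta(N)}/Z(H)$. The first identity holds for every $\chi\in\Irr(G)$ simply because $Z(\chi)/\ker\chi=Z(G/\ker\chi)$; full ramification plays no role. But the nested half then rests entirely on the claim that $\beta$ carries $\mathcal{F}_G=\{\ker\chi\cap G':\chi\in\Irr(G)\}$ onto $\mathcal{F}_H$, and you leave exactly this unproved, as you acknowledge. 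The membership criteria you sketch are either too vague to check or false when taken literally. For $G=A_4$ and $\chi$ its faithful character of degree $3$, one has $\ker\chi\cap G'=1$. Yet $G'\cong C_2\times C_2$ is not cyclic, and it has no faithful $G$-invariant irreducible character, since the only $G$-invariant one is trivial. So, as written, the nested statement is not established. Your assertion that $\beta$ respects the conjugation action also needs proof. It follows from the compatibility $\beta(c)Z(H)=\alpha(cZ(G))$ for $c\in G'$ (both sides are homomorphisms $G'\to H/Z(H)$ that agree on commutators). This gives $\beta(c^x)=\beta(c)^y$ whenever $yZ(H)=\alpha(xZ(G))$.

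The missing step is short once you use Clifford theory instead of looking for a cyclicity criterion. For any irreducible constituent $\theta$ of $\chi\downarrow_{G'}$ we have $\ker\chi\cap G'=\ker(\chi\downarrow_{G'})=\bigcap_{x\in G}\ker(\theta^x)$. Every $\theta\in\Irr(G')$ lies under some $\chi\in\Irr(G)$, so $\mathcal{F}_G=\{\bigcap_{x\in G}\ker(\theta^x):\theta\in\Irr(G')\}$. By the equivariance above, the map $\theta\mapsto\theta\circ\beta^{-1}$ then gives $\beta(\mathcal{F}_G)=\mathcal{F}_H$. A simpler route bypasses kernels altogether. We have $g\in Z(\chi)$ if and only if $[g,G]\le\ker\chi$, and hence $[g,G]=\bigcap\{\ker\chi : g\in Z(\chi)\}$. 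From these, a two-line argument shows that the centers $Z(\chi)$ form a chain if and only if the subgroups $[g,G]$, $g\in G$, form a chain. By the same computation you used for flatness, $\beta$ maps the family $\{[g,G]\}$ onto $\{[h,H]\}$ and preserves inclusions. Either way, nestedness is an isoclinism invariant for arbitrary finite groups, and together with your GVZ half this proves the theorem.
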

	
	James~\cite{RJ} classified all $p$-groups of order at most $p^6$ for odd primes $p$, although some errors occur in the case of order $p^6$. Earlier, Bender~\cite{Bender} had determined the groups of order $p^5$ for odd primes $p$. An independent classification of groups of order $p^6$ was later obtained in~\cite{O'Brien}. For $p \geq 5$, the number of isomorphism types of groups of order $p^6$ is
	$$
	3p^2 + 39p + 344 + 24 \gcd(p - 1, 3) + 11 \gcd(p - 1, 4) + 2 \gcd(p - 1, 5),
	$$
	while for $p=3$ there are $504$ such groups (see~\cite[Theorem 1]{O'Brien}). Presentations of $p$-groups of order at most $p^5$, organized by isoclinism families, appear in~\cite{RJ}, and corrected presentations for order $p^6$ with $p \geq 7$ are given in~\cite{NewmanO`Brien}. The cases $p \in \{3,5\}$ are available in the \textsc{SmallGroups} library of {\sf GAP}~\cite{Gap} and {\sf MAGMA}~\cite{Magma}.
	
	The isoclinism family $\Phi_1$ consists of abelian groups of order $p^n$ for all $1 \leq n \leq 6$. Non-abelian groups of order $p^3$ lie in $\Phi_2$. Groups of order $p^4$ fall into three families; those of class $2$ lie in $\Phi_2$, while those of class $3$ lie in $\Phi_3$. There are $10$ isoclinism classes for groups of order $p^5$, denoted $\Phi_i$ for $1 \leq i \leq 10$, and $43$ isoclinism classes for groups of order $p^6$, denoted $\Phi_i$ for $1 \leq i \leq 43$.
	
	By Theorem~\ref{thm:isoGVZ}, the classification of GVZ-groups (respectively, nested GVZ-groups) of order $p^n$ reduces to determining the corresponding isoclinism families. We prove Theorem~\ref{thm:isoGVZp^6}, which classifies GVZ $p$-groups (respectively, nested GVZ $p$-groups) of order at most $p^6$.

		\begin{theorem}\label{thm:isoGVZp^6}
		Let $G$ be a finite $p$-group of order at most $p^6$, where $p$ is an odd prime. Then $G$ is a GVZ-group if and only if
		$$
		G \in \Phi_1 \cup \Phi_{2} \cup \Phi_{4} \cup \Phi_{5} \cup \Phi_{7} \cup \Phi_{8} \cup \Phi_{11} \cup \Phi_{12} \cup \Phi_{13} \cup \Phi_{14} \cup \Phi_{15} \cup \Phi_{18} \cup \Phi_{21}.
		$$
		Moreover, $G$ is a nested GVZ-group if and only if
		$$
		G \in \Phi_1 \cup \Phi_{2} \cup \Phi_{5} \cup \Phi_{7} \cup \Phi_{8} \cup \Phi_{13} \cup \Phi_{14} \cup \Phi_{15} \cup \Phi_{21}.$$
	\end{theorem}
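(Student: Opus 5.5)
By Theorem~\ref{thm:isoGVZ}, both properties are invariants of isoclinism families. So it suffices to decide, for each of $\Phi_1,\dots,\Phi_{43}$, whether one convenient representative $G$ of that family is a GVZ-group, and if so whether it is nested. The order bound $p^6$ ensures that every $p$-group of order at most $p^6$ lies in one of these families. Here I use that groups of order $p^n$ in $\Phi_i$ are isoclinic to those of order $p^m$ in $\Phi_i$ after taking direct products with abelian groups.

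For each family I pick a representative from James' list~\cite{RJ}, or from~\cite{NewmanO`Brien} for order $p^6$, preferably one of minimal order, i.e.\ a stem group with $Z(G)\le G'$.

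\textbf{Class $2$ families.} The families $\Phi_1,\Phi_2,\Phi_4,\Phi_5,\Phi_{11},\Phi_{12},\Phi_{13},\Phi_{14},\Phi_{15}$ should be exactly those of class at most $2$ in James' list. Every class-$2$ group is GVZ by~\cite[Theorem~2.31]{I}, so for these families only nestedness needs checking. For a class-$2$ group the centers are $Z(\chi)/\ker\chi=Z(G/\ker\chi)$. Concretely, for a linear character $\lambda$ of $G'$ (which is central), the nonlinear characters lying over $\lambda$ have center $\{g : [g,G]\subseteq\ker\lambda\}$. Hence nestedness amounts to these subgroups forming a chain as $\lambda$ ranges over $\Irr(G')$.

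I then compute this chain condition family by family:
\begin{itemize}
\item $\Phi_2$ has $|G'|=p$, so the condition holds trivially.
\item $\Phi_5$ is extraspecial-like, i.e.\ VZ, so it holds.
\item For $\Phi_4$, $\Phi_{11}$ and $\Phi_{12}$ I expect to exhibit two kernels $K_1,K_2$ of index $p$ in $G'$ whose corresponding centers are incomparable. For example, in $\Phi_4$ with $G=\langle a,b,c\rangle$, $[a,b]=z_1$, $[a,c]=z_2$, one $\lambda$ gives center $\langle b\rangle Z$ and another gives $\langle c\rangle Z$.
\item For $\Phi_{13},\Phi_{14},\Phi_{15}$ I expect every $\lambda$ to be nondegenerate, giving VZ-like behaviour and hence nestedness.
\end{itemize}

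\textbf{Families of class at least $3$.} Here I use the flatness criterion~\cite[Theorem~A]{Burkett}: $G$ is GVZ iff $|\cl_G(g)|=|[g,G]|$ for all $g$. To rule a family out it suffices to find one element $g$ in the representative with $|\cl_G(g)|<|\langle [g,x]\rangle|$. Typically I take $g$ a generator, and show that the commutators $[g,x]$ do not form a subgroup, because some product is missing from the class.

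To confirm $\Phi_7,\Phi_8,\Phi_{18},\Phi_{21}$ are GVZ, I would verify directly that each irreducible character vanishes off its center. I would do this by inducing from a maximal abelian normal subgroup and computing centers of the quotients $G/\ker\chi$. Alternatively, I would show that each $[g,G]$ is a normal subgroup equal to $g^{-1}\cl_G(g)$. For nestedness of $\Phi_7,\Phi_8,\Phi_{21}$, I would list the centers: linear characters give $G$, and each character degree gives a single center, decreasing with degree. For $\Phi_{18}$, I would exhibit two characters of equal degree with distinct centers, which contradicts nestedness by~\cite[Lemma~7.1]{LewisGVZ2}.

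\textbf{Main obstacle.} The main difficulty is the sheer case analysis over the roughly $30$ families of class at least $3$ at order $p^6$. This must be done uniformly in the odd prime $p$ and using corrected presentations. The delicate part is proving flatness for $\Phi_{18}$ and $\Phi_{21}$, where commutator calculus in class $3$ needs care. For these I would first try to show that every $g\notin Z(G)$ satisfies $[g,G]\supseteq\gamma_3(G)$ together with a clean description of $[g,G]$ modulo $\gamma_3(G)$. I would then use the resulting count of conjugacy classes.
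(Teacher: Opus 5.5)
Your reduction to one representative per isoclinism family is sound, and so is your class-$2$ criterion. If $\chi$ lies over $\lambda\in\Irr(G')$, then $Z(\chi)=Z_\lambda=\{g : [g,G]\subseteq\ker\lambda\}$, and nestedness is exactly the chain condition on the $Z_\lambda$. This settles $\Phi_2,\Phi_4,\Phi_5,\Phi_{11},\Phi_{12},\Phi_{15}$ as you describe. However, your prediction for $\Phi_{13}$ and $\Phi_{14}$ is false: there, not every nontrivial $\lambda$ is nondegenerate. Both families have $\cd(G)=\{1,p,p^2\}$. For $\Phi_{13}$ this is Lemma~\ref{lemma:Phi13}. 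In $\Phi_{14}$ one has $G'\cong C_{p^2}$, and the $\lambda$ of order $p$ give characters of degree $p$. Among $\Phi_{11},\dots,\Phi_{15}$ only $\Phi_{15}$ is VZ. What makes the two families nested is different in each case. In $\Phi_{13}$, where $G'=Z(G)\cong C_p\times C_p$, the degenerate $\lambda$ all share a single kernel $K$: the Pfaffian of the pencil of forms has a double root, whereas in $\Phi_{12}$ it has two distinct roots. So the centres are $Z(G)<H<G$ with $H/K=Z(G/K)$. In $\Phi_{14}$, $G'$ is cyclic, so the kernels $\ker\lambda$ form a chain, and therefore so do the $Z_\lambda$. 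Your criterion gives the right answer once this is computed, but the step as you wrote it would fail.

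The more serious gap is that for the families of class at least $3$ you give no argument. You only state an intention to find a non-flat element by hand in each of roughly thirty presentations, uniformly in $p$, and that is essentially the whole content of the theorem. The paper makes this tractable with structural facts you do not use.

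(i) Since $\chi(1)^2$ divides $|G:Z(G)|\le p^5$, one has $\cd(G)\subseteq\{1,p,p^2\}$. Burkett's bound $\nil(G)\le|\cd(G)|$ (Lemma~\ref{lemma:nilclassGVZ}) then excludes every family of class $4$ or $5$. It also excludes $\Phi_{16}$, which has class $3$ and only two character degrees.

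(ii) If $Z(G)$ is cyclic and $G$ is GVZ, a faithful irreducible $\chi$ has $Z(\chi)=Z(G)$ and vanishes off it, so $|G:Z(G)|=\chi(1)^2$ must be a square. This excludes $\Phi_{22}$ and $\Phi_{31},\dots,\Phi_{34}$, where $|G:Z(G)|=p^5$; the paper phrases this via the minimal faithful permutation degree.

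(iii) Being GVZ passes to quotients (Lemma~\ref{lemma:qotient}). So $\Phi_{17},\Phi_{19},\Phi_{20}$ fail, because each has a quotient of order $p^5$ in the non-GVZ families $\Phi_3\cup\Phi_6$. The order-$p^5$ families themselves are taken from the known classification.

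Likewise, for $\Phi_{18}$ and $\Phi_{21}$ no commutator calculus is needed. Characters of degree $p^2=|G:Z(G)|^{1/2}$ are automatically fully ramified over $Z(G)$ (Lemma~\ref{lemma:Special}). A count shows that every character of degree $p$ factors through a class-$2$ quotient ($G/K\in\Phi_4$, respectively $G/Z(G)$), and hence is of central type. Without arguments of this kind, your proposal proves neither the failure of the GVZ property for any family of class at least $3$, nor its validity for $\Phi_7,\Phi_8,\Phi_{18},\Phi_{21}$.
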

	
Lewis~\cite{LewisGVZ2}, using {\sf MAGMA}~\cite{Magma}, determined that there are exactly $111$ non-abelian nested GVZ-groups of order $3^6$. As a consequence of Theorem~\ref{thm:isoGVZp^6}, Corollary~\ref{coro:GVZcounting} provides explicit formulas for the number of isomorphism classes of GVZ $p$-groups (respectively, nested GVZ $p$-groups) of order $p^6$ for all primes $p \geq 5$.
	\begin{corollary}\label{coro:GVZcounting}
		Let $p \geq 5$ be a prime. Then the number of isomorphism types of GVZ $p$-groups of order $p^6$ is
		$$
		\frac{3p^2 + 28p + 315 + 2 \gcd(p - 1, 3) + 2 \gcd(p - 1, 4)}{2}.
		$$
		Moreover, the number of isomorphism types of nested GVZ $p$-groups of order $p^6$ is
		$$
		\frac{3p^2 + 10p + 187}{2}.
		$$
	\end{corollary}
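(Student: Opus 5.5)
The corollary is a counting consequence of Theorem~\ref{thm:isoGVZp^6}. Since being GVZ (respectively, nested GVZ) depends only on the isoclinism family, the number of GVZ groups of order $p^6$ is the sum, over the families $\Phi_1,\Phi_2,\Phi_4,\Phi_5,\Phi_7,\Phi_8,\Phi_{11},\Phi_{12},\Phi_{13},\Phi_{14},\Phi_{15},\Phi_{18},\Phi_{21}$, of the number of groups of order $p^6$ in that family. The nested count is the analogous sum over $\Phi_1,\Phi_2,\Phi_5,\Phi_7,\Phi_8,\Phi_{13},\Phi_{14},\Phi_{15},\Phi_{21}$. The plan is therefore to take, for each of these families, the number of isomorphism types of order $p^6$ as a function of $p$ (for $p\ge 5$), and add them.

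\textbf{Step 1: collect the family sizes.} For $p\ge 7$ I will read the sizes from the corrected presentations in~\cite{NewmanO`Brien}. These are organised by family and record how many groups each parametrised presentation yields. Typical counts are:
\begin{itemize}
\item $\Phi_1$: $11$ abelian groups, one for each partition of $6$;
\item $\Phi_2$: a count involving $\gcd(p-1,3)$ and $\gcd(p-1,4)$ terms, coming from the choices of nonresidue parameters;
\item families such as $\Phi_{13}$, $\Phi_{15}$, $\Phi_{21}$: counts that are linear or quadratic in $p$ with parity-type halvings, which is why the overall answer carries a factor $\tfrac12$.
\end{itemize}
For $p=5$, I will check that the same formulas hold, using \textsc{SmallGroups} via {\sf GAP}/{\sf MAGMA} or O'Brien's tables~\cite{O'Brien}. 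I expect the general formula to specialise correctly there.

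\textbf{Step 2: add and simplify.} Adding the contributions should give
\[
\frac{3p^2+28p+315+2\gcd(p-1,3)+2\gcd(p-1,4)}{2}
\]
for the GVZ count. Restricting the sum to the nested families should give $\frac{3p^2+10p+187}{2}$. In particular, the $\gcd$ terms should cancel or be absent in the nested sum. As a consistency check, I will compare each total against the overall count $3p^2+39p+344+\dots$ of groups of order $p^6$ from~\cite{O'Brien}: the complement (non-GVZ families) must account for the remainder.

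\textbf{Main obstacle.} The main obstacle is bookkeeping. The per-family counts in the literature contain known errors (notably in James~\cite{RJ}), and several families have counts depending on congruences of $p$. I will rely on the Newman--O'Brien--Vaughan-Lee corrected counts and cross-check them numerically, for instance at $p=7$ and $p=11$, against the {\sf MAGMA} database.
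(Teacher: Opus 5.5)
Your proposal is correct and is essentially the paper's argument: since being GVZ (respectively nested GVZ) is constant on isoclinism families by the main theorem, one adds the numbers of groups of order $p^6$ in the listed families, and the paper reads these off directly from \cite[Table~2]{O'Brien}, which covers all $p\ge 5$, so you need neither the $p\ge 7$ presentations nor a separate check at $p=5$. Your illustrative guesses about individual family sizes play no role in the argument and should not be relied on; for instance, the nested formula has no $\gcd$ terms, which indicates that $\Phi_2$ contributes none.
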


	\section{Preliminaries}\label{sec:preliminaries}
	
	In this section, we fix notation and recall some basic prerequisites. Throughout, $p$ denotes an odd prime and $G$ a finite group. Let $\Irr(G)$ be the set of irreducible complex characters of $G$, with $\lin(G)=\{\chi \in \Irr(G) : \chi(1)=1\}$ and $\nl(G)=\{\chi \in \Irr(G) : \chi(1)\neq 1\}$. For $m \in \mathbb{Z}_{>0}$, set $\Irr_m(G)=\{\chi \in \Irr(G) : \chi(1)=m\}$ and $\cd(G)=\{\chi(1) : \chi \in \Irr(G)\}$. If $N \trianglelefteq G$, write $\Irr(G \mid N)=\{\chi \in \Irr(G) : N \nsubseteq \ker(\chi)\}$ and $\nl(G \mid N)=\{\chi \in \nl(G) : N \nsubseteq \ker(\chi)\}$. For $\chi \in \Irr(G)$ and $H \leq G$, denote by $\chi \downarrow_H$ the restriction of $\chi$ to $H$. Further, let $\nil(G)$ and $\mu(G)$ denote the nilpotency class and the minimal faithful permutation degree of $G$, respectively. All other notation is standard.
	
	We now recall some preliminary notions, beginning with the definition of a \emph{Camina pair} (see \cite{Camina}).
	\begin{definition}
		Let $N$ be a normal subgroup of a finite group $G$. The pair $(G, N)$ is called a \emph{Camina pair} if $1 < N < G$ and, for every $g \in G \setminus N$, the element $g$ is conjugate to each element of the coset $gN$.
	\end{definition}
	
	A necessary and sufficient condition for the pair $(G, N)$ to be a Camina pair is that $\chi(g)=0$ for all $g \in G \setminus N$ and every $\chi \in \Irr(G \mid N)$. It is straightforward to verify that if $(G, N)$ is a Camina pair, then $Z(G) \leq N \leq G'$. In \cite{MLL3}, Lewis first studied groups $G$ for which $(G, Z(G))$ forms a Camina pair and showed that such a group must be a $p$-group for some prime $p$. The following lemma describes the relationship between $\Irr(G \mid Z(G))$ and $\Irr(Z(G))$ in this situation.
	
	\begin{lemma}\label{lemma:Caminacharacter}\textnormal{\cite[Lemma 3.3]{SKP}}
		Let $(G, Z(G))$ be a Camina pair. Then there exists a bijection between the sets $\Irr(G \mid Z(G))$ and $\Irr(Z(G)) \setminus \{1_{Z(G)}\}$, where $1_{Z(G)}$ denotes the trivial character of $Z(G)$. For each $1_{Z(G)} \neq \mu \in \Irr(Z(G))$, the corresponding character $\chi_\mu \in \nl(G)$ is given by
		\begin{equation}\label{Caminacharacter}
			\chi_\mu(g) =
			\begin{cases}
				|G/Z(G)|^{\frac{1}{2}} \, \mu(g) & \text{if } g \in Z(G), \\
				0 & \text{otherwise}.
			\end{cases}
		\end{equation}
	\end{lemma}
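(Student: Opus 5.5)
Since $(G,Z(G))$ is a Camina pair, every $\chi\in\Irr(G\mid Z(G))$ vanishes on $G\setminus Z(G)$. So I will first show that each such $\chi$ has the stated form, then count the characters to get the bijection.

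Fix $\chi \in \Irr(G\mid Z(G))$. By Schur's lemma, $\chi\downarrow_{Z(G)} = \chi(1)\mu$ for a unique linear $\mu \in \Irr(Z(G))$. Because $Z(G)\nsubseteq \ker(\chi)$, this $\mu$ is nontrivial. The Camina condition gives $\chi(g)=0$ for $g\notin Z(G)$, and $\chi(g)=\chi(1)\mu(g)$ for $g\in Z(G)$. Computing the norm,
\[
1=\langle \chi,\chi\rangle=\frac{1}{|G|}\sum_{g\in Z(G)}\chi(1)^2|\mu(g)|^2=\frac{\chi(1)^2|Z(G)|}{|G|},
\]
so $\chi(1)=|G/Z(G)|^{1/2}$. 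Hence $\chi$ equals $\chi_\mu$ as in (\ref{Caminacharacter}). In particular $\chi$ is determined by $\mu$, so the map $\chi\mapsto\mu$ is an injection from $\Irr(G\mid Z(G))$ into $\Irr(Z(G))\setminus\{1_{Z(G)}\}$. Also $\chi(1)>1$, since $G$ is nonabelian ($Z(G)<G$), so $\chi_\mu\in\nl(G)$.

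The main step is surjectivity. Given a nontrivial $\mu$, consider the induced character $\mu^G$. It restricts to $Z(G)$ as $|G:Z(G)|\mu$. By Frobenius reciprocity, every irreducible constituent $\chi$ of $\mu^G$ lies over $\mu$, so $\chi\in\Irr(G\mid Z(G))$ and, by the first step, $\chi=\chi_\mu$. Thus $\mu^G=m\,\chi_\mu$ for some $m\ge1$, which proves surjectivity.

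As a consistency check, comparing degrees gives $|G:Z(G)|=m|G:Z(G)|^{1/2}$, so $m=|G/Z(G)|^{1/2}$. One could instead count: $\sum_{\chi\in\Irr(G\mid Z(G))}\chi(1)^2=|G|-|G/Z(G)|=|G/Z(G)|(|Z(G)|-1)$. Each term equals $|G/Z(G)|$, so there are exactly $|Z(G)|-1$ such characters, which again forces the injection to be a bijection.

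I expect no real obstacle here. The only points needing care are that $\mu$ is nontrivial and that the Camina vanishing condition applies to every character in $\Irr(G\mid Z(G))$.
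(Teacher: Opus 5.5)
Your proof is correct and complete. The paper itself gives no proof of this lemma; it quotes it from \cite[Lemma 3.3]{SKP}. So there is no in-paper argument to compare against, and your write-up supplies a self-contained one.

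Your argument uses the same tools the paper deploys elsewhere:
\begin{itemize}
\item The norm computation $\langle \chi,\chi\rangle=\chi(1)^2|Z(G)|/|G|$ is the equality case of Lemma~\ref{lemma:innerproductrestriction} with $H=Z(G)$. The paper runs that criterion in the converse direction in Lemma~\ref{lemma:Special}, deducing vanishing off $Z(G)$ from $\chi(1)^2=|G:Z(G)|$.
\item Your alternative count, $\sum_{\chi\in\Irr(G\mid Z(G))}\chi(1)^2=|G/Z(G)|\,(|Z(G)|-1)$, is the same kind of degree count the paper performs in Lemma~\ref{lemma:Camina}(3).
\end{itemize}

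Either your Frobenius-reciprocity step or the counting step suffices on its own for surjectivity, since you have already shown injectivity. Each also shows that the function $\chi_\mu$ in the statement really is an irreducible character for every nontrivial $\mu$. That is what makes $\mu\mapsto\chi_\mu$ well defined.

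You were also right to check that $\mu$ is nontrivial and that $Z(G)<G$. These guarantee that $\chi_\mu\in\Irr(G\mid Z(G))$ and that $\chi_\mu(1)>1$.
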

	
	A pair $(G, N)$ is called a \emph{generalized Camina pair} if $N$ is a normal subgroup of $G$ and every nonlinear irreducible complex character of $G$ vanishes on $G \setminus N$ (see \cite{MLL}). A group $G$ is called a \emph{VZ-group} if $(G, Z(G))$ is a generalized Camina pair.
	
	We now recall the following lemmas concerning the degrees and vanishing subgroups of irreducible characters of a group, which will be used frequently in the sequel.
	\begin{lemma}\cite[Theorem 20]{Berkovich}\label{lemma:Berkovich}
		If $G$ is a finite $p$-group, then $\chi(1)^2$ divides $|G/Z(G)|$ for each $\chi \in \Irr(G)$.
	\end{lemma}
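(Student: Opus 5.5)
We need to prove Lemma (Berkovich): if $G$ is a finite $p$-group then $\chi(1)^2$ divides $|G/Z(G)|$ for each $\chi\in\Irr(G)$.

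The idea is to compare $\chi(1)^2$ with the index of the center of $\chi$, and then with the index of the center of $G$. The inequality $\chi(1)^2 \le |G:Z(\chi)|$ is standard. In a $p$-group every order involved is a power of $p$, so a comparison of sizes would become a divisibility statement provided we can show that $\chi(1)^2$ divides $|G:Z(\chi)|$ itself. Since $Z(G)\le Z(\chi)$, the index $|G:Z(\chi)|$ divides $|G:Z(G)|$, and that would finish the argument.

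First step: recall the standard facts about $Z(\chi)$.
\begin{itemize}
\item $Z(\chi)$ is a normal subgroup of $G$ containing $Z(G)$.
\item $Z(\chi)/\ker\chi$ is cyclic and equals $Z(G/\ker\chi)$.
\item The restriction $\chi\downarrow_{Z(\chi)}$ equals $\chi(1)\lambda$ for some linear character $\lambda$ of $Z(\chi)$.
\end{itemize}

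Second step: count via Frobenius reciprocity. We have $\langle \chi\downarrow_{Z(\chi)},\lambda\rangle=\chi(1)$. Hence $\chi$ is a constituent of $\lambda^G$ with multiplicity $\chi(1)$, which gives
\[
\chi(1)^2\le \lambda^G(1)=|G:Z(\chi)|.
\]
Both sides are powers of $p$, because $\chi(1)$ divides $|G|$. So the inequality is already the divisibility $\chi(1)^2 \mid |G:Z(\chi)|$.

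Conclusion: this index divides $|G:Z(G)|$ because $Z(G)\le Z(\chi)$. Therefore $\chi(1)^2$ divides $|G/Z(G)|$.

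The only point needing care is the passage from the inequality to divisibility. This is exactly where the $p$-group hypothesis enters: for general groups one only gets $\chi(1)^2\le|G:Z(G)|$. For a $p$-group, both quantities are powers of the same prime $p$, so the inequality is equivalent to divisibility. I expect no genuine obstacle here.
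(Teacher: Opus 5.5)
Your proof is correct and complete. The paper gives no proof of this lemma; it only quotes it from Berkovich, so there is no internal argument to compare against.

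What you give is the standard derivation. The restriction $\chi\downarrow_{Z(\chi)}=\chi(1)\lambda$ together with Frobenius reciprocity yields $\chi(1)^2\le|G:Z(\chi)|$. The same inequality also follows from Lemma~\ref{lemma:innerproductrestriction} with $H=Z(\chi)$, since $\langle\chi\downarrow_{Z(\chi)},\chi\downarrow_{Z(\chi)}\rangle=\chi(1)^2$. The inequality becomes divisibility because $\chi(1)\mid|G|$ makes both sides powers of $p$. Finally, $Z(G)\le Z(\chi)$ gives $|G:Z(\chi)|\mid|G:Z(G)|$.

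Your intermediate conclusion $\chi(1)^2\mid|G:Z(\chi)|$ is slightly sharper than the stated lemma. It is also the form relevant to the GVZ arguments in the paper, where the equality $\chi(1)^2=|G:Z(\chi)|$ characterizes characters of central type.
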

	
	\begin{lemma}\label{lemma:innerproductrestriction}\textnormal{\cite[Lemma 2.29]{I}}
		Let $H$ be a subgroup of a finite group $G$, and let $\chi$ be a character of $G$. Then we have
		\[
		\langle \chi \downarrow_H, \chi \downarrow_H \rangle \leq |G/H| \langle \chi, \chi \rangle,
		\]
		with equality if and only if $\chi(g)=0$ for all $g \in G \setminus H$.
	\end{lemma}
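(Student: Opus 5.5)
This is Lemma~\ref{lemma:innerproductrestriction}. I would prove it by writing both inner products as sums over group elements and comparing them term by term.

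\emph{Rewriting both sides.} By definition,
\[
|G|\langle \chi,\chi\rangle=\sum_{g\in G}|\chi(g)|^2 .
\]
Similarly,
\[
|G/H|\,\langle \chi\downarrow_H,\chi\downarrow_H\rangle=\frac{|G|}{|H|}\cdot\frac{1}{|H|}\sum_{h\in H}|\chi(h)|^2 .
\]
This normalization is not the one I want, so I first check which form of the inequality is actually intended.

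\emph{Fixing the direction of the inequality.} Take $\chi$ irreducible and $H=1$. Then $\langle \chi\downarrow_1,\chi\downarrow_1\rangle=\chi(1)^2$, while $|G/H|\langle\chi,\chi\rangle=|G|$. So the claimed inequality becomes $\chi(1)^2\le |G|$, which is true. This confirms the statement should read
\[
\langle\chi\downarrow_H,\chi\downarrow_H\rangle\le |G:H|\,\langle\chi,\chi\rangle .
\]
Multiplying both sides by $|H|$, this is equivalent to
\[
\sum_{h\in H}|\chi(h)|^2\le \sum_{g\in G}|\chi(g)|^2 .
\]

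\emph{Concluding.} The right-hand sum splits as the sum over $H$ plus the sum over $G\setminus H$. Every term in the second part is nonnegative, so the inequality holds. Equality holds exactly when
\[
\sum_{g\in G\setminus H}|\chi(g)|^2=0,
\]
that is, exactly when $\chi(g)=0$ for all $g\in G\setminus H$.

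\emph{Where care is needed.} There is no real obstacle. The only point requiring attention is the normalization: $\langle\cdot,\cdot\rangle$ on $H$ carries the factor $1/|H|$, while on $G$ it carries $1/|G|$. Once the inequality is multiplied through by $|H|$, these factors cancel correctly and the argument above goes through.
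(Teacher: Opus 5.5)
Your proof is correct, and it is the standard argument behind Isaacs' Lemma~2.29, which the paper cites without giving a proof. Multiplying through by $|H|$ reduces the claim to $\sum_{h\in H}|\chi(h)|^2\le\sum_{g\in G}|\chi(g)|^2$, with equality exactly when $\sum_{g\in G\setminus H}|\chi(g)|^2=0$. The first display (which puts the factor $|G/H|$ on the restricted side) and the $H=1$ sanity check are unnecessary detours, since the statement already has the right form, so you can drop them.
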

	
	We conclude this section by recalling the notion of \emph{isoclinism}. This notion, introduced by Hall in \cite{PH} for the classification of $p$-groups, is ubiquitous throughout this article.
	
	\begin{definition}
		Two finite groups $G$ and $H$ are said to be \emph{isoclinic} if there exist isomorphisms $\theta : G/Z(G) \to H/Z(H)$ and $\phi : G' \to H'$ such that the diagram
		\[
		\begin{tikzcd}
			G/Z(G) \times G/Z(G) \arrow{d}{\theta \times \theta} \arrow{r}{a_G}
			& G' \arrow{d}{\phi} \\
			H/Z(H) \times H/Z(H) \arrow{r}{a_H}
			& H'
		\end{tikzcd}
		\]
		is commutative, where $a_G(g_1Z(G), g_2Z(G)) = [g_1,g_2]$ for $g_1,g_2 \in G$, and $a_H(h_1Z(H), h_2Z(H)) = [h_1,h_2]$ for $h_1,h_2 \in H$.
	\end{definition}
	
	The pair $(\theta, \phi)$ is called an \emph{isoclinism} from $G$ onto $H$. This notion generalizes isomorphism, and it is well known that isoclinic nilpotent groups have the same nilpotency class.

	\section{GVZ-groups of order $p^6$}
	In this section, we classify all GVZ-groups (respectively, nested GVZ-groups) of order $p^6$. Hall~\cite{PH} introduced isoclinism as a generalization of isomorphism for the classification of $p$-groups, and James~\cite{RJ} later employed this notion to classify $p$-groups of order up to $p^6$. The groups of order $p^6$ are partitioned into $43$ isoclinism families, denoted by $\Phi_i$ for $1 \leq i \leq 43$, where the family $\Phi_1$ consists of all abelian groups. Although some inaccuracies occur in the classification given in~\cite{RJ}, the subsequent works of~\cite{O'Brien, NewmanO`Brien} refine and confirm the structure of these isoclinism families. The invariants associated with these families are summarized in~\cite[Subsection~4.1]{RJ}. In our analysis, we primarily use invariants such as the structure of the derived subgroup, the central quotient, and the character degrees. These invariants suffice to determine whether all groups in a given isoclinism family are GVZ-groups (respectively, nested GVZ-groups). When explicit group presentations are required, we adopt the descriptions provided in~\cite{NewmanO`Brien}. We now proceed with Lemma~\ref{lemma:isoGVZp^5}.
	
	\begin{lemma}\cite[Corollary~5]{Ram6}\label{lemma:isoGVZp^5}
		Let $G$ be a non-abelian group of order $p^5$. Then $G$ is a GVZ-group if and only if $G \in \Phi_{2} \cup \Phi_{4} \cup \Phi_{5} \cup \Phi_{7} \cup \Phi_{8}$. Moreover, $G$ is a nested GVZ-group if and only if $G \in \Phi_{2} \cup \Phi_{5} \cup \Phi_{7} \cup \Phi_{8}$.	
	\end{lemma}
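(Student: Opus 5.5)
The statement is quoted from \cite[Corollary~5]{Ram6}, so strictly it needs no new argument; still, here is how I would reprove it. By Theorem~\ref{thm:isoGVZ}, both properties are constant on isoclinism families. So it suffices to choose one convenient representative in each of the families $\Phi_2,\dots,\Phi_{10}$ of groups of order $p^5$ and test it. For each representative I would use the invariants recorded by James~\cite{RJ}: $G'$, $G/Z(G)$, $Z(G)$, the nilpotency class and $\cd(G)$. I would test the GVZ property through the criterion already quoted: $\chi$ is of central type if and only if $\chi(1)^2=|G/Z(\chi)|$.

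For the positive families the checks are as follows.
\begin{itemize}
\item $\Phi_2$ has class $2$, so it is GVZ by \cite[Theorem~2.31]{I}.
\item $\Phi_4$ also has class $2$. Nestedness is a separate check, done below.
\item $\Phi_5$ is a family of VZ-groups: $(G,Z(G))$ is a Camina pair with $|G/Z(G)|=p^4$ and $\cd(G)=\{1,p^2\}$. Lemma~\ref{lemma:Caminacharacter} then gives every nonlinear character explicitly and shows it is fully ramified over $Z(G)$. Hence the family is nested GVZ.
\item $\Phi_7$ and $\Phi_8$ have class $3$. Here I would take representatives with $|Z(G)|=p$ and $|G/Z(G)|=p^4$, with $\cd(G)=\{1,p\}$ for $\Phi_7$ and $\{1,p^2\}$ for $\Phi_8$ (to be confirmed from the tables).
\end{itemize}

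For $\Phi_8$, Lemma~\ref{lemma:Berkovich} shows that degree $p^2$ forces $\chi(1)^2=|G/Z(G)|$ for every faithful $\chi$, so those $\chi$ are fully ramified. The non-faithful nonlinear characters factor through $G/Z(G)$, which has order $p^4$ and belongs to a GVZ family; an induction on order handles them. For $\Phi_7$ I would treat the characters of degree $p$ directly. Restricting to a suitable abelian normal subgroup $A$ of index $p$ (or $p^2$) and applying Lemma~\ref{lemma:innerproductrestriction}, I would show that $\chi$ vanishes off $Z(\chi)$, and that $|G:Z(\chi)|=p^2$ for every $\chi\in\nl(G)$. This last equality also yields nestedness, because all the $Z(\chi)$ would then be subgroups of index $p^2$ containing a common subgroup. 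That common subgroup is $Z_2(G)$ or $Z(G)$, and I expect it to be forced to be the same for all $\chi$.

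For the negative families $\Phi_3,\Phi_6,\Phi_9,\Phi_{10}$, it suffices to exhibit a single character that is not of central type. Being GVZ passes to quotients, since characters of $G/N$ are characters of $G$ with the same vanishing set relative to the centre. So if $G$ has a quotient in $\Phi_3$ of order $p^4$, that is, a maximal-class group of order $p^4$, then $G$ is not GVZ. This applies to $\Phi_9$ and $\Phi_{10}$, whose groups have maximal class. The group $\Phi_3$ of order $p^4$ itself fails because it has characters of degree $p$ with $|G/Z(\chi)|=p^3$, which is not a square. For $\Phi_6$, which has class $3$ and $G'$ of order $p^3$, I would compute $|G/Z(\chi)|$ for a faithful character of degree $p$ and find an odd exponent.

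Non-nestedness of $\Phi_4$ comes from exhibiting two linear characters whose kernels contain incomparable subgroups. More directly, I would take two characters of degree $p$ with different, incomparable centres. Such characters come from distinct index-$p$ subgroups of $G'\cong C_p^3$ lying over $Z(G)$, and they give centres of the same order that are not equal.

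The main obstacle will be $\Phi_7$. There the degree is not forced to be maximal, so showing that every degree-$p$ character is of central type needs an explicit presentation and control of $Z(\chi)$ for each kernel. I would first try the restriction argument to a maximal abelian normal subgroup.
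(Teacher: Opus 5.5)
\textbf{Verdict: genuine gap.} The paper does not prove this lemma; it imports it from \cite{Ram6}, so your reproof has to stand on its own. Reducing to one representative per family via Theorem~\ref{thm:isoGVZ} is sound. The gap is in the positive class-$3$ families $\Phi_7$ and $\Phi_8$: your argument there rests on wrong degree data and would not go through.

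\textbf{The degree sets are wrong.} Both families have $\cd(G)=\{1,p,p^2\}$, not $\{1,p\}$ resp.\ $\{1,p^2\}$. This is forced, since a GVZ-group satisfies $\nil(G)\le|\cd(G)|$ (Lemma~\ref{lemma:nilclassGVZ}); with your degree sets the statement itself would be false. Concretely, take $\Phi_7(1^5)=\langle\alpha,\alpha_1,\alpha_2,\alpha_3,\beta\rangle$ with $[\alpha_1,\alpha]=\alpha_2$ and $[\alpha_2,\alpha]=[\alpha_1,\beta]=\alpha_3$. Then $Z(G)=\langle\alpha_3\rangle$. For every $g\notin Z(G)$, one of $[g,\alpha_2],[g,\alpha_1],[g,\beta],[g,\alpha]$ is a nontrivial power of $\alpha_3$. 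Hence $(G,Z(G))$ is a Camina pair, and the $p-1$ faithful characters have degree $p^2$ with $Z(\chi)=Z(G)$ of index $p^4$.

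\textbf{Consequences for your argument.} Your target ``$|G:Z(\chi)|=p^2$ for every $\chi\in\nl(G)$'' is false. The abelian normal subgroup of index $p$ you want to restrict to does not exist, since it would force $\cd(G)\subseteq\{1,p\}$. In $\Phi_8$ you take for granted that every faithful character has degree $p^2$. With the true degree set you must rule out faithful characters of degree $p$; such a character would have $Z(\chi)=Z(G)$ and would not be of central type. Nestedness of $\Phi_8$ (and of $\Phi_2$) is never addressed.

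\textbf{The missing idea.} Use the structure the paper exploits for $\Phi_{21}$ in Lemmas~\ref{lemma:Camina} and~\ref{lemma:class4}. For $G\in\Phi_7\cup\Phi_8$ one has $|Z(G)|=p$, $(G,Z(G))$ is a Camina pair, and $G/Z(G)$ is a VZ-group of order $p^4$ in $\Phi_2$. For $\Phi_8$, take the metacyclic representative $C_{p^3}\rtimes C_{p^2}$; its faithful characters are induced from the normal cyclic subgroup of order $p^3$, hence have degree $p^2$. Then the faithful characters are fully ramified with $Z(\chi)=Z(G)$. The remaining nonlinear characters are lifts from $G/Z(G)$, so $Z(\chi)=Z_2(G)$ by Lemma~\ref{lemma:qotient}. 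Nestedness follows from $Z(G)<Z_2(G)$.

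\textbf{Smaller errors elsewhere.}
\begin{itemize}
\item In $\Phi_4$ one has $G'=Z(G)\cong C_p^2$ and $G/Z(G)\cong C_p^3$, not $G'\cong C_p^3$.
\item Linear characters can never witness non-nestedness, since their centre is $G$. The right witnesses are two degree-$p$ characters whose kernels meet $G'$ in different subgroups of order $p$. For example, in $\Phi_4(1^5)$, where $[\alpha_i,\alpha]=\beta_i$, these give centres $\langle\alpha_1\rangle G'\neq\langle\alpha_2\rangle G'$.
\item In $\Phi_6$ the centre is $C_p\times C_p$, so there is no faithful irreducible character at all. Use your own quotient argument instead ($G/K\in\Phi_3$ for any $K\le Z(G)$ of order $p$), or Lemma~\ref{lemma:nilclassGVZ} with $\cd(G)=\{1,p\}$.
\end{itemize}
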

	
	Lemma~\ref{lemma:direct product} describes the behavior of direct products of GVZ-groups and nested GVZ-groups.
	\begin{lemma}\label{lemma:direct product}
		Let $G$ and $H$ be finite GVZ-groups. Then the direct product $G \times H$ is also a GVZ-group. Furthermore, if $G$ and $H$ are finite nested GVZ-groups, then $G \times H$ is a nested GVZ-group if and only at least one of $G$ and $H$ is abelian.
	\end{lemma}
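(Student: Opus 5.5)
Since the irreducible characters of $G\times H$ are exactly the products $\chi\times\psi$ with $\chi\in\Irr(G)$ and $\psi\in\Irr(H)$, I will prove everything through the identity
\[
Z(\chi\times\psi)=Z(\chi)\times Z(\psi).
\]
To establish it, note that $(\chi\times\psi)(g,h)=\chi(g)\psi(h)$ and $(\chi\times\psi)(1)=\chi(1)\psi(1)$. Because $|\chi(g)|\le\chi(1)$ and $|\psi(h)|\le\psi(1)$, the equality $|\chi(g)\psi(h)|=\chi(1)\psi(1)$ forces equality in both factors.

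\textbf{GVZ part.} Take $(g,h)\notin Z(\chi)\times Z(\psi)$. Then either $g\notin Z(\chi)$, so $\chi(g)=0$, or $h\notin Z(\psi)$, so $\psi(h)=0$. In either case the product vanishes, so every $\chi\times\psi$ is of central type and $G\times H$ is a GVZ-group.

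\textbf{Nested part, sufficiency.} Suppose $H$ is abelian. Then every $\psi\in\Irr(H)$ is linear, so $Z(\psi)=H$. Hence $Z(\chi\times\psi)=Z(\chi)\times H$, and these subgroups form a chain because the $Z(\chi)$ do. So $G\times H$ is a nested GVZ-group.

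\textbf{Nested part, necessity.} Suppose both $G$ and $H$ are non-abelian.
\begin{itemize}
\item Pick $\chi\in\nl(G)$. Since $\chi$ is of central type and nonlinear, $\chi(1)^2=|G:Z(\chi)|$ (the equivalence quoted from \cite[Corollary 2.30]{I}), so $Z(\chi)<G$.
\item Similarly pick $\psi\in\nl(H)$, so that $Z(\psi)<H$.
\item Compare $\chi\times 1_H$ and $1_G\times\psi$. Their centers are $Z(\chi)\times H$ and $G\times Z(\psi)$.
\item Neither contains the other: $Z(\chi)\times H\subseteq G\times Z(\psi)$ would force $H\subseteq Z(\psi)$, and the reverse inclusion would force $G\subseteq Z(\chi)$.
\end{itemize}
So $G\times H$ is not nested, which proves necessity.

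There is no real obstacle. The only care needed is the elementary inequality used for the center identity.
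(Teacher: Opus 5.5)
Your proof is correct, and it departs from the paper in one place: the sufficiency direction.

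\textbf{Where you agree with the paper.} For the GVZ part and for necessity you follow the same argument, including the choice of $\chi\times 1_H$ and $1_G\times\psi$. You also supply two justifications the paper leaves implicit: why $Z(\chi\times\psi)=Z(\chi)\times Z(\psi)$ holds, and why $Z(\chi)<G$ for $\chi\in\nl(G)$.

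\textbf{Where you differ.} For sufficiency, the paper observes that $G\times H$ is isoclinic to $G$ when $H$ is abelian. It then invokes the isoclinism invariance of nested GVZ-groups (Theorem~\ref{thm:isoGVZ}). You instead compute directly that every irreducible character of $G\times H$ has center $Z(\chi)\times H$. The chain of centers in $G$ therefore transfers to $G\times H$, and together with your GVZ part this gives the conclusion.

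\textbf{Comparison.} Your route is self-contained and uses only the center identity you already established. The paper's route is a one-line reduction, but it depends on an external invariance theorem that is much heavier than this special case requires.
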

	
	\begin{proof}
		Let $\chi \in \Irr(G)$ and $\psi \in \Irr(H)$. Recall that
		\[
		\Irr(G \times H) = \{ \chi \otimes \psi : \chi \in \Irr(G), \psi \in \Irr(H) \}.
		\]
		Note that
		\[
		Z(\chi \otimes \psi) = Z(\chi) \times Z(\psi),
		\]
		where $Z(\chi) = \{ g \in G : |\chi(g)| = \chi(1) \}$ and $Z(\psi) = \{ g \in G : |\psi(g)| = \psi(1) \}$.
		
		Since $G$ and $H$ are GVZ-groups, $\chi$ vanishes outside $Z(\chi)$ and $\psi$ vanishes outside $Z(\psi)$. Then for all $(g,h) \in G \times H \setminus Z(\chi \otimes \psi)$, we have
		\[
		(\chi \otimes \psi)(g,h) = \chi(g)\psi(h) = 0,
		\]
		so $G \times H$ is a GVZ-group.
		
		Next, let $G$ and $H$ be finite nested GVZ-groups. Without loss of generality suppose that $H$ is abelian. Hence, observe that $G$ and $G \times H$ are isoclinic. Therefore, by Theorem~\ref{thm:isoGVZ}, $G \times H$ is a nested GVZ-group. For the converse part, suppose that both the groups $G$ and $H$ are non-abelian. Let $\chi \in \nl(G)$ and $\psi \in \nl(H)$. Then there exist proper subgroups $G_1 < G$ and $H_1 < H$ such that
		\[ Z(\chi) = G_1 \, \, \, \, \text{and} \, \, \, \, Z(\psi) = H_1.\]
		Consider the characters of $G \times H$
		\[
		\chi \otimes 1_H \, \, \, \, \text{and} \, \, \, \, 1_G \otimes \psi,
		\]
		where $1_G$ and $1_H$ denote the trivial characters of $G$ and $H$, respectively. Furthermore, we have
		\[
		Z(\chi \otimes 1_H) = G_1 \times H \, \, \, \, \text{and} \, \, \, \,
		Z(1_G \otimes \psi) = G \times H_1.
		\]
		Therefore, neither
		\[
		G_1 \times H \subseteq G \times H_1 \, \, \, \, \text{nor} \, \, \, \,
		G \times H_1 \subseteq G_1 \times H
		\]
		holds. Thus, $G \times H$ is not a nested GVZ-group. This completes the proof of Lemma~\ref{lemma:direct product}.
	\end{proof}
	
	We now prove Lemma~\ref{lemma:class1}, which classifies all GVZ-groups (respectively, nested GVZ-groups) of order $p^6$ lying in $\bigcup_{i=1}^{10} \Phi_i$.
	\begin{lemma}\label{lemma:class1}
	Let $G$ be a group of order $p^6$ such that $G \in \bigcup_{i=1}^{10} \Phi_i$. Then $G$ is a GVZ-group if and only if $G \in \Phi_{1} \cup \Phi_{2} \cup \Phi_{4} \cup \Phi_{5} \cup \Phi_{7} \cup \Phi_{8}$. Moreover, $G$ is a nested GVZ-group if and only if $G \in \Phi_{1} \cup \Phi_{2} \cup \Phi_{5} \cup \Phi_{7} \cup \Phi_{8}$.
	\end{lemma}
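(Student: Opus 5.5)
The approach is to reduce the order $p^6$ case to the order $p^5$ case via isoclinism. The families $\Phi_1,\dots,\Phi_{10}$ are exactly the isoclinism families already realized by groups of order at most $p^5$. In James's classification, every family $\Phi_i$ with $1\le i\le 10$ contains a group $H$ of order $p^5$, and for $i\ge 2$ such an $H$ is non-abelian.

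Given $G$ of order $p^6$ with $G\in\Phi_i$, $1\le i\le 10$, I choose a representative $H\in\Phi_i$ of order $p^5$ (for $i=1$ any abelian group). Since $G$ and $H$ are isoclinic, Theorem~\ref{thm:isoGVZ}, applied in both directions because isoclinism is symmetric, gives:
\begin{itemize}
\item $G$ is a GVZ-group if and only if $H$ is;
\item $G$ is a nested GVZ-group if and only if $H$ is.
\end{itemize}
Abelian groups are trivially nested GVZ-groups, since $Z(\chi)=G$ for every $\chi$. So $\Phi_1$ belongs to both lists.

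For $2\le i\le 10$, Lemma~\ref{lemma:isoGVZp^5} applied to $H$ decides membership. $H$ is GVZ exactly when $i\in\{2,4,5,7,8\}$, and nested GVZ exactly when $i\in\{2,5,7,8\}$. Transporting these answers back to $G$ gives the stated lists.

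An alternative concrete route is to write $G\cong H\times C_p$ whenever such a decomposition exists, and then apply Lemma~\ref{lemma:direct product}. However, this decomposition need not exist for every $G$, which is why isoclinism is the cleaner tool.

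The main obstacle is justifying that each of $\Phi_1,\dots,\Phi_{10}$ has a member of order $p^5$. One also needs that the labels $\Phi_i$ used for order $p^6$ agree with those in Lemma~\ref{lemma:isoGVZp^5}. Both facts I would cite from James~\cite{RJ}: his families are defined globally by isoclinism, and $\Phi_1$–$\Phi_{10}$ are precisely the families meeting orders at most $p^5$. One can check this concretely by taking $H\times C_p$ for $H$ of order $p^5$ in $\Phi_i$, which has order $p^6$ and lies in $\Phi_i$. Conversely, any $G$ of order $p^6$ in $\Phi_i$ is isoclinic to that $H$ by definition of the family.
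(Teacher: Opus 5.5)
Your proposal is correct and is essentially the paper's argument: both transfer the order-$p^5$ classification of Lemma~\ref{lemma:isoGVZp^5} to order $p^6$ through the isoclinism invariance of Theorem~\ref{thm:isoGVZ}. The paper phrases this via the representative $H\times C_p\in\Phi_i$ and Lemma~\ref{lemma:direct product}. That lemma is redundant, since $H\times C_p$ is isoclinic to $H$; and because only one representative per family is needed, the non-existence of such a splitting for a general $G$, which you point out, never causes a problem.
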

	\begin{proof}
		Note that, by definition of GVZ-groups, every abelian group is a GVZ-group and, in fact, a nested GVZ-group. Hence, by Lemma~\ref{lemma:isoGVZp^5}, all groups of order $p^5$ belonging to $\Phi_{1} \cup \Phi_{2} \cup \Phi_{4} \cup \Phi_{5} \cup \Phi_{7} \cup \Phi_{8}$ are GVZ-groups, and those belonging to $\Phi_{1} \cup \Phi_{2} \cup \Phi_{5} \cup \Phi_{7} \cup \Phi_{8}$ are nested GVZ-groups. Moreover, these conditions are both necessary and sufficient.
		
		Furthermore, for each $i \in \{1,2,\ldots,10\}$, there exists a group $G$ of order $p^6$ in the isoclinism family $\Phi_i$ such that $G \cong H \times C_p$, where $H$ is a group of order $p^5$ in $\Phi_i$ and $C_p$ is the cyclic group of order $p$. Therefore, by Lemma~\ref{lemma:direct product} and Theorem~\ref{thm:isoGVZ}, the desired result follows. This completes the proof of Lemma~\ref{lemma:class1}.
	\end{proof}

	Before proving Lemma~\ref{lemma:class2}, we establish the following facts, which are essential for the proof of this lemma as well as the subsequent lemmas.
\begin{lemma}\label{lemma:Special}
	Let $G$ be a finite group. Suppose $G$ has an irreducible character $\chi \in \Irr(G)$ such that $\chi(1)= |G/Z(G)|^{\frac{1}{2}}$. Then $\chi$ is of central type and $Z(\chi)=Z(G)$.
\end{lemma}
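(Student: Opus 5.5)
The natural tool is Lemma~\ref{lemma:innerproductrestriction} applied with $H=Z(G)$. First I will identify the restriction $\chi\downarrow_{Z(G)}$. Since $\chi$ is irreducible, Schur's lemma shows that each $z \in Z(G)$ acts as a scalar in a representation affording $\chi$. Hence $\chi\downarrow_{Z(G)}=\chi(1)\lambda$ for some linear character $\lambda$ of $Z(G)$. Two consequences follow immediately. First, $|\chi(z)|=\chi(1)$ for all $z\in Z(G)$, so $Z(G)\subseteq Z(\chi)$. Second, $\langle \chi\downarrow_{Z(G)},\chi\downarrow_{Z(G)}\rangle=\chi(1)^2$.

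Next I compare this with the inequality of Lemma~\ref{lemma:innerproductrestriction}. With $H=Z(G)$ and $\langle\chi,\chi\rangle=1$, it gives $\chi(1)^2\le |G/Z(G)|$. By hypothesis $\chi(1)^2=|G/Z(G)|$, so equality holds. The equality clause of that lemma then says $\chi(g)=0$ for every $g\in G\setminus Z(G)$.

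Now I will deduce both conclusions. Take $g\notin Z(G)$. Then $\chi(g)=0$, and $0\neq\chi(1)$, so $|\chi(g)|\neq\chi(1)$ and $g\notin Z(\chi)$. Therefore $Z(\chi)\subseteq Z(G)$. Combined with the earlier inclusion, this gives $Z(\chi)=Z(G)$. Since $\chi$ vanishes on $G\setminus Z(G)=G\setminus Z(\chi)$, $\chi$ is of central type by definition.

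There is no real obstacle here; the argument is short. The only point needing care is invoking the equality case of Lemma~\ref{lemma:innerproductrestriction} correctly, which requires computing the norm of the restriction exactly via Schur's lemma.
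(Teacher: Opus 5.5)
Your proof is correct and takes the same route as the paper: write $\chi\downarrow_{Z(G)}=\chi(1)\lambda$, note that its norm equals $|G/Z(G)|$, and use the equality case of Lemma~\ref{lemma:innerproductrestriction} to get vanishing off $Z(G)$. Your version is cleaner in three respects: you compute the norm correctly as $\chi(1)^2=|G/Z(G)|$ (the paper's display slips, writing $p^2$ for $\chi(1)$ and $|G/Z(G)|^{1/2}$ for the norm); you check both inclusions giving $Z(\chi)=Z(G)$ explicitly; and you avoid the paper's side remark that $(G,Z(G))$ is a Camina pair, which does not follow from a single character vanishing off $Z(G)$ and is not needed.
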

\begin{proof}
	Observe that $\chi \downarrow_{Z(G)}=p^2\mu$ for some $\mu \in \Irr(Z(G))$. Furthermore, we have 
	\begin{align*}
		\langle \chi \downarrow_{Z(G)}, \chi \downarrow_{Z(G)} \rangle = & \langle |G/Z(G)|^{\frac{1}{2}}\mu, |G/Z(G)|^{\frac{1}{2}}\mu \rangle\\
		= & |G/Z(G)| \langle \mu, \mu \rangle\\
		= & |G/Z(G)|^{\frac{1}{2}}\\
		= & |G/Z(G)|\langle \chi, \chi \rangle.
	\end{align*}
	Hence, from Lemma \ref{lemma:innerproductrestriction}, we have $\chi(g)=0$ for all $g \in G \setminus Z(G)$. Therefore, $(G, Z(G))$ is a Camina pair. Hence, $\chi$ is of central type and $Z(\chi)=Z(G)$. This completes the proof of Lemma~\ref{lemma:Special}.
\end{proof}

\begin{lemma}\label{lemma:qotient}
	Let $N \trianglelefteq G$ and let $\bar{\chi} \in \Irr(G/N)$. Let $\chi \in \Irr(G)$ be the lift of $\bar{\chi}$ to $G$, i.e., $\chi(g)=\bar{\chi}(gN)$ for all $g \in G$. Then
	\[
	\chi \text{ is of central type in } G \;\Longleftrightarrow\; \bar{\chi} \text{ is of central type in } G/N.
	\]
\end{lemma}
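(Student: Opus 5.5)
The plan is to compare the centers and vanishing sets of $\chi$ and $\bar{\chi}$ directly, using only that $\chi(g)=\bar{\chi}(gN)$ for every $g \in G$. We take ``central type'' in the sense of the introduction: a character is of central type if it vanishes off its center.

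\emph{Step 1: the centers correspond.} We first show that
\[
Z(\chi)=\{g\in G : gN \in Z(\bar{\chi})\}.
\]
Since $\chi(1)=\bar{\chi}(N)=\bar{\chi}(1)$, the condition $|\chi(g)|=\chi(1)$ is literally the same as $|\bar{\chi}(gN)|=\bar{\chi}(1)$. In particular $N\subseteq \ker(\chi)\subseteq Z(\chi)$, so $Z(\chi)/N = Z(\bar{\chi})$. Equivalently, $Z(\chi)$ is the full preimage of $Z(\bar{\chi})$ under the natural map $G\to G/N$.

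\emph{Step 2: the complements correspond.} Because $Z(\chi)$ is a full preimage, it is a union of $N$-cosets. Hence $g\in G\setminus Z(\chi)$ if and only if $gN\in (G/N)\setminus Z(\bar{\chi})$.

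\emph{Step 3: compare vanishing.} The values satisfy $\chi(g)=\bar{\chi}(gN)$, and as $g$ runs over $G\setminus Z(\chi)$, the coset $gN$ runs over all of $(G/N)\setminus Z(\bar{\chi})$, since the map is surjective. Therefore $\chi$ vanishes on $G\setminus Z(\chi)$ exactly when $\bar{\chi}$ vanishes on $(G/N)\setminus Z(\bar{\chi})$. This gives both directions of the equivalence at once.

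\emph{Main obstacle.} There is no real difficulty; the only point needing care is Step 1. There one must check that $N\subseteq Z(\chi)$, so that the correspondence of centers is a full-preimage correspondence and no coset is split between the inside and outside of $Z(\chi)$. This follows from $\chi(n)=\bar{\chi}(N)=\chi(1)$ for all $n\in N$.
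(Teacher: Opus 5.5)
Your proposal is correct and follows essentially the same route as the paper: both identify $Z(\chi)$ with the full preimage of $Z(\bar{\chi})$ under $G\to G/N$, using $|\chi(g)|=|\bar{\chi}(gN)|$ and $\chi(1)=\bar{\chi}(1)$. Both then transfer vanishing off the center in each direction via $\chi(g)=\bar{\chi}(gN)$ and surjectivity of the projection; your separate check that $N\subseteq Z(\chi)$ is harmless but already implied by that preimage description.
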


\begin{proof}
Since $\chi(g)=\bar{\chi}(gN)$, we have $|\chi(g)| = |\bar{\chi}(gN)|$ for all $g \in G$. Hence
\[
Z(\chi)
= \{ g \in G : |\chi(g)| = \chi(1) \}
= \{ g \in G : |\bar{\chi}(gN)| = \bar{\chi}(1) \}
= \pi^{-1}(Z(\bar{\chi})),
\]
where $\pi : G \to G/N$ is the natural projection.

Suppose that $\chi$ is of central type. Let $xN \notin Z(\bar{\chi})$. Then $x \notin Z(\chi)$, so $\chi(x)=0$. Hence $\bar{\chi}(xN)=0$, and thus $\bar{\chi}$ is of central type.

Conversely, suppose that $\bar{\chi}$ is of central type. Let $g \notin Z(\chi)$. Then $gN \notin Z(\bar{\chi})$, so $\bar{\chi}(gN)=0$. Consequently, $\chi(g)=\bar{\chi}(gN)=0$, and therefore $\chi$ is of central type.
\end{proof}

Lemma~\ref{lemma:Phi13} describes a technique for determining all irreducible complex characters of groups of order $p^6$ belonging to $\Phi_{12} \cup \Phi_{13}$. This lemma is essential for the proof of Lemma~\ref{lemma:class2}.
\begin{lemma}\label{lemma:Phi13}
	Let $G$ be a group of order $p^6$ such that $G \in \Phi_{12} \cup \Phi_{13}$. Then we have the following.
	\begin{enumerate}
		\item $\cd(G) = \{1, p, p^2\}$.
		\item There is a bijection between the sets $\{\bar{\chi} \in \nl(G/K) : C_p \cong K < Z(G)\}$ and $\nl(G)$, where for $\chi \in \nl(G)$, $\bar{\chi}$ lifts to $\chi$.
		\item If $C_p \cong K < Z(G)$, then $(G/K, Z(G/K))$ is a generalized Camina pair.
	\end{enumerate}
\end{lemma}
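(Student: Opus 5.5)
The plan is to work with the standard invariants of $\Phi_{12}$ and $\Phi_{13}$ listed in \cite[Subsection~4.1]{RJ}. Both families have class $2$ and $G/Z(G) \cong C_p^4$. For a group of order $p^6$ in either family (which is a stem group, being of minimal order in its family) we also have $Z(G)=G' \cong C_p \times C_p$. All three parts are proved from these facts. Throughout, for $\chi \in \nl(G)$ we use that $\chi\downarrow_{Z(G)}=\chi(1)\mu$ for some $\mu \in \Irr(Z(G))$, and that $\mu \neq 1_{Z(G)}$, since otherwise $G'=Z(G)\le \ker(\chi)$ and $\chi$ would be linear.

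For (2), take $\chi \in \nl(G)$ and put $K=\ker(\mu)$. Since $Z(G)\cong C_p^2$ and $\mu$ is nontrivial, $K\cong C_p$ and $K<Z(G)$. As $K$ is central and $\chi\downarrow_K=\chi(1)1_K$, we get $K\le\ker(\chi)$. Thus $\chi$ is the lift of some $\bar\chi\in\nl(G/K)$. Moreover $K=\ker(\chi)\cap Z(G)$, so $K$ is determined by $\chi$. Conversely, let $K<Z(G)$ with $K\cong C_p$ and $\bar\chi \in \nl(G/K)$. Its lift $\chi$ is nonlinear, so $\ker(\chi)\cap Z(G)$ is a proper subgroup of $Z(G)$ containing $K$, hence equal to $K$. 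Therefore the sets $\nl(G/K)$, as $K$ runs over the $p+1$ subgroups of order $p$ of $Z(G)$, are pairwise disjoint after lifting, and together they exhaust $\nl(G)$. This gives the bijection.

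For (3), fix such a $K$. The group $\bar G=G/K$ has order $p^5$, class at most $2$, and $|\bar G'|=|G'/K|=p$. Let $\bar\chi\in\nl(\bar G)$ and $x\in\bar G\setminus Z(\bar G)$. Choose $y$ with $z=[x,y]\neq 1$; then $\langle z\rangle=\bar G'\le Z(\bar G)$. Because $\bar\chi$ is nonlinear, $\bar G'\not\le\ker(\bar\chi)$, so $\bar\chi\downarrow_{\bar G'}=\bar\chi(1)\lambda$ with $\lambda$ nontrivial and $\lambda(z)\neq1$. Then
\[
\bar\chi(x)=\bar\chi(x^y)=\bar\chi(xz)=\lambda(z)\bar\chi(x),
\]
so $\bar\chi(x)=0$. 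Hence $(G/K,Z(G/K))$ is a generalized Camina pair.

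For (1), Lemma~\ref{lemma:Berkovich} gives $\chi(1)^2 \mid p^4$, so $\cd(G)\subseteq\{1,p,p^2\}$, and $G$ is non-abelian. Fix a nontrivial $\mu\in\Irr(Z(G))$. Since $Z(G)$ is central, the characters lying over $\mu$ satisfy $\sum_{\chi\in\Irr(G\mid\mu)}\chi(1)^2=|G:Z(G)|=p^4$. Degree $p^2$ occurs over $\mu$ exactly when $\mu$ is fully ramified, i.e.\ when $Z(G/\ker\mu)=Z(G)/\ker\mu$. Degree $p$ occurs over $\mu$ exactly when $Z(G/\ker\mu)$ is strictly larger. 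In terms of the commutator map, this amounts to asking whether the alternating form $\mu\circ[\cdot,\cdot]$ on $G/Z(G)\cong\mathbb F_p^4$ is nondegenerate or degenerate. So it suffices to show that, among the $p+1$ kernels $K$, the Pfaffian of this pencil of forms takes both zero and nonzero values. This is the main obstacle, since it requires knowing the actual commutator structure of each family. For $\Phi_{12}$ the check is direct: it contains $H_1 \times H_2$ with $H_i$ extraspecial of order $p^3$, giving degrees $p\cdot 1$ and $p\cdot p$. For $\Phi_{13}$ I would read off the relations from the presentations in \cite{NewmanO`Brien} and compute the Pfaffian, a binary quadratic form in the coordinates of $\mu$, verifying that it is neither identically zero nor anisotropic. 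Alternatively, one can simply cite the character degree invariant recorded in \cite[Subsection~4.1]{RJ}.
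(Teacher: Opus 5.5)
Your proposal is correct and follows the paper's route: part (1) ultimately rests on the degree invariant in \cite[Subsection~4.1]{RJ}, though your Pfaffian sketch settles $\Phi_{12}$ by hand. Part (2) uses the same observation that $\ker(\chi)$ contains an order-$p$ subgroup of the noncyclic $Z(G)=G'$, and part (3) likewise reduces to $|(G/K)'|=p$. The difference is that you prove what the paper only asserts or cites: the identity $K=\ker(\chi)\cap Z(G)$, which makes the map in (2) a genuine bijection, and the commutator argument $\bar\chi(x)=\lambda(z)\bar\chi(x)$, which shows that $|\bar G'|=p$ forces every nonlinear character to vanish off $Z(\bar G)$.
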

\begin{proof}
	The part (1) follows directly from \cite[Section 4.1]{RJ}. Let $G$ be a finite $p$-group such that $Z(G)$ is elementary abelian with $|Z(G)| > p$. Then every $\chi \in \Irr(G)$ is the lift of some $\overline{\chi} \in \Irr(G/K)$, where $K \cong C_p$ and $K < Z(G)$. Hence, part (2) follows from the fact that $Z(G) = G' \cong C_p \times C_p$.
	
	Next, let $G$ be a group of order $p^6$ such that $G \in \Phi_{12} \cup \Phi_{13}$, and suppose that $Z(G) = G' \cong \langle a, b \rangle$. Let $K \cong C_p$ with $K < Z(G)$. Then $K = \langle a \rangle$ or $K = \langle a^i b \rangle$ for some $0 \leq i \leq p-1$. If $K = \langle a \rangle$ or $K = \langle b \rangle$, then $(G/K)' \cong \langle bK \rangle \cong C_p$ or $(G/K)' \cong \langle aK \rangle \cong C_p$, respectively. If $K = \langle a^i b \rangle$ for $1 \leq i \leq p-1$, then $(G/K)' \cong \langle aK \rangle \cong C_p$, since $bK = a^{-i}K$. Thus, in all cases, $(G/K)' \cong C_p$. Therefore, $G/K$ is a group of order $p^5$ with derived subgroup of order $p$. It follows that $(G/K, Z(G/K))$ is a generalized Camina pair (see \cite[Section 4.1]{RJ}). Hence, part (3) follows. This completes the proof of Lemma~\ref{lemma:Phi13}.
\end{proof}
	
	We are now ready to prove Lemma~\ref{lemma:class2}, which shows that every group of order $p^6$ in $\bigcup_{i=11}^{15} \Phi_i$ is a GVZ-group, and furthermore gives a classification of all nested GVZ-groups contained in $\bigcup_{i=11}^{15} \Phi_i$.
	\begin{lemma}\label{lemma:class2}
		Let $G$ be a group of order $p^6$ such that $G \in \bigcup_{i=11}^{15} \Phi_i$. Then $G$ is a GVZ-group. Furthermore, $G$ is a nested GVZ-group if and only if $G \in \Phi_{13} \cup \Phi_{14} \cup \Phi_{15}$.
	\end{lemma}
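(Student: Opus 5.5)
Families $\Phi_{11},\ldots,\Phi_{15}$ of order $p^6$ all consist of groups of nilpotency class $2$. By \cite[Section 4.1]{RJ}, $G' \cong C_p \times C_p$ for these families; in $\Phi_{12},\Phi_{13}$ moreover $Z(G)=G'$, while in $\Phi_{11}$ we have $G/Z(G)\cong C_p^3$ with $|Z(G)|=p^3$. The plan is to settle the GVZ property uniformly and then treat nestedness family by family. By \cite[Theorem~2.31]{I}, every nilpotent group of class $2$ is a GVZ-group, so every $G\in\bigcup_{i=11}^{15}\Phi_i$ is a GVZ-group. For $\Phi_{12}\cup\Phi_{13}$ one can also check this directly. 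Lemma~\ref{lemma:Phi13}(2) shows that each $\chi\in\nl(G)$ is the lift of some $\bar\chi\in\nl(G/K)$ with $C_p\cong K<Z(G)$. By Lemma~\ref{lemma:Phi13}(3), $\bar\chi$ vanishes off $Z(G/K)\subseteq Z(\bar\chi)$, so $\bar\chi$ is of central type, and Lemma~\ref{lemma:qotient} transfers this to $\chi$. By Theorem~\ref{thm:isoGVZ}, it suffices throughout to work with one convenient representative of each family.

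For nestedness I would compute the character centers. For each $\chi\in\nl(G)$, $Z(\chi)/\ker\chi=Z(G/\ker\chi)$. Since $G'\cong C_p^2$, every nonlinear $\chi$ has kernel not containing $G'$, so $\ker\chi\cap G'=K$ is one of the $p+1$ subgroups of order $p$ in $G'$, or $K=1$. In class $2$ we have $Z(\chi)=\{g: [g,G]\subseteq \ker\chi\}$, which depends only on $K=\ker\chi\cap G'$. Thus $Z(\chi)=Z_K:=\{g\in G:[g,G]\subseteq K\}$ for the relevant $K$.

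\textbf{Non-nested families.} For $\Phi_{12}$ (and similarly $\Phi_{11}$), I would exhibit two subgroups $K_1\neq K_2$ of order $p$ in $G'$ such that $G/K_1$ and $G/K_2$ both have nonlinear characters, and such that the two corresponding centers $Z_{K_1},Z_{K_2}$ are incomparable. In $\Phi_{12}$, a representative is $H_1\times H_2$ with $H_1,H_2$ extraspecial of order $p^3$, up to isoclinism. Then Lemma~\ref{lemma:direct product} gives non-nestedness at once, since both factors are non-abelian nested GVZ. For $\Phi_{11}$, where $G/Z(G)\cong C_p^3$, the commutator map identifies $G'$ with $\wedge^2(G/Z(G))$, of rank $3$ — this needs checking against James's table, since perhaps $G'$ is of rank $3$ there instead. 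In that case, for $K$ a hyperplane of $G'$, $Z_K/Z(G)$ is a $1$-dimensional subspace depending on $K$. Distinct $K$ therefore give distinct lines, which are incomparable. This is a concrete computation from the Newman–O'Brien presentation.

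\textbf{Nested families.} For $\Phi_{13},\Phi_{14},\Phi_{15}$ I would show that all $Z_K$ ($K\neq G'$ giving nonlinear characters) coincide or form a chain. In $\Phi_{13}$, the expectation is that $\cd(G)=\{1,p,p^2\}$, where the degree-$p^2$ characters have center $Z(G)$ by Lemma~\ref{lemma:Special}. The degree-$p$ characters would then all share a common center $Z_K$ of index $p^2$, which contains $Z(G)$, so the centers form a chain. In $\Phi_{15}$, I expect every $K$ to satisfy $Z_K=Z(G)$, since $G/K$ is extraspecial-like of order $p^5$, making $G$ a VZ-group. The main obstacle is exactly this step: verifying from explicit presentations in \cite{NewmanO`Brien} that the commutator form is such that centers are comparable in $\Phi_{13},\Phi_{14},\Phi_{15}$ but not in $\Phi_{11},\Phi_{12}$. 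I would do this by writing the commutator map as a pair of alternating forms $(f_1,f_2)$ on $V=G/Z(G)$ and computing the radicals of $\alpha f_1+\beta f_2$ over all $(\alpha:\beta)\in\mathbb{P}^1(\mathbb{F}_p)$. Nestedness then holds exactly when these radicals form a chain.
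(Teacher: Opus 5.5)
\textbf{There is a genuine gap: the \emph{if} direction of the nestedness claim is not proved for $\Phi_{13}$, $\Phi_{14}$ or $\Phi_{15}$, and part of your setup is wrong.}

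Your GVZ argument (class $2$ plus \cite[Theorem~2.31]{I}) and your $\Phi_{12}$ argument (a direct product of two extraspecial groups plus Lemma~\ref{lemma:direct product}) are the paper's. Your $\Phi_{11}$ argument is a valid, more explicit alternative to the paper's, which says that a nested GVZ group with two degrees must be VZ and $\Phi_{11}$ is not. Your hedge resolves the right way: James's table gives $G'\cong C_p^3\cong\wedge^2(G/Z(G))$ there. A hyperplane $K$ of $G'$ then gives a nonzero $2$-form on $\mathbb{F}_p^3$, which is determined up to scalar by its $1$-dimensional radical. So distinct $K$ give distinct, hence incomparable, $Z_K$. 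The gap is that you defer the nested cases as ``the main obstacle''.

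\emph{$\Phi_{14}$.} Your standing claim $G'\cong C_p\times C_p$ is false here (and also for $\Phi_{11}$). These are two-generator groups with $Z(G)=G'\cong C_{p^2}$ and $G/Z(G)\cong C_{p^2}\times C_{p^2}$; see $G_{(14,3)}$. So a pencil of $\mathbb{F}_p$-valued alternating forms on $G/Z(G)$ does not model them. Your own formula $Z(\chi)=Z_K$ with $K=\ker\chi\cap G'$ closes this case at once. The subgroups of the cyclic group $G'$ form a chain, and $K\mapsto Z_K$ preserves inclusion, so all character centers are comparable. The paper cites \cite{Nenciu2generators} instead.

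\emph{$\Phi_{13}$.} That all degree-$p$ characters share one center is stated only as an expectation, yet it is exactly what separates $\Phi_{13}$ from $\Phi_{12}$. Two distinct degenerate members of the pencil cannot have the same ($2$-dimensional) radical, since a common radical would be central. Hence nestedness is equivalent to there being a \emph{unique} $K\cong C_p$ in $G'$ with $\cd(G/K)=\{1,p\}$, and you do not prove this. The paper gets uniqueness from James's count $|\Irr_p(G)|=p^3-p^2=|\nl(G/K)|$. This works because distinct $K$ contribute disjoint sets of degree-$p$ characters, as $\chi$ determines $\ker\chi\cap G'$.

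\emph{$\Phi_{15}$.} ``Every $Z_K=Z(G)$'' needs the input $\cd(G)=\{1,p^2\}$ from \cite{RJ}; after that, Lemma~\ref{lemma:Special} finishes the case.
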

	\begin{proof}
		Recall that a nilpotent group of class $2$ is a GVZ-group (see~\cite[Theorem~2.31]{I}). The groups of order $p^6$ belonging to $\Phi_{11} \cup \Phi_{12} \cup \Phi_{13} \cup \Phi_{14} \cup \Phi_{15}$ have nilpotency class $2$ (see~\cite[Subsection~4.1]{RJ}). Hence, all these groups are GVZ-groups.
		
		Note that $|G/Z(G)|^{1/2} = p^2$ for $G \in \Phi_{12} \cup \Phi_{13} \cup \Phi_{14} \cup \Phi_{15}$. Therefore, by Lemma~\ref{lemma:Special}, every $\chi \in \Irr_{p^2}(G)$ is of central type and satisfies $Z(\chi)=Z(G)$. 
		
		If $G \in \Phi_{15}$, then $\cd(G)=\{1, p^2\}$ (see~\cite[Subsection~4.1]{RJ}). Thus $(G, Z(G))$ is a generalized Camina pair, and hence $G$ is a VZ-group. Consequently, $G \in \Phi_{15}$ is a nested GVZ-group.
		
		For $G \in \Phi_{11}$, we have $|\cd(G)|=2$ (see~\cite[Subsection~4.1]{RJ}). A nested GVZ $p$-group with exactly two character degrees is necessarily a VZ-group. Since this is not the case here, it follows that $G \in \Phi_{11}$ is not a nested GVZ-group.
		
		We now show that $G \in \Phi_{12}$ is not a nested GVZ-group. There exist groups in $\Phi_{12}$ of the form $G = H \times K$, where $H$ and $K$ are non-abelian nested GVZ-groups of order $p^3$. For $p \geq 7$, by~\cite{NewmanO`Brien}, we have
		\begin{align*}
			G_{(12,1)} &= \langle \alpha_1, \alpha_2, \alpha_3, \alpha_4, \alpha_5, \alpha_6 : [\alpha_3,\alpha_4]=\alpha_1,\; [\alpha_5,\alpha_6]=\alpha_2, \\
			&\qquad \alpha_1^p=\alpha_2^p=\alpha_3^p=\alpha_4^p=\alpha_5^p=\alpha_6^p=1 \rangle \\
			&\cong \langle \alpha_1,\alpha_3,\alpha_4 \rangle \times \langle \alpha_2,\alpha_5,\alpha_6 \rangle,
		\end{align*}
		where each factor is an extraspecial group of order $p^3$ and exponent $p$. Since extraspecial $p$-groups are VZ-groups, Lemma~\ref{lemma:direct product} implies that $G_{(12,1)}$ is not a GVZ-group. Hence, by Theorem~\ref{thm:isoGVZ}, no group in $\Phi_{12}$ is a nested GVZ-group.
		
		Next, consider $G \in \Phi_{14}$. Groups in this isoclinism family are two-generator $p$-groups. In particular, for $p \geq 7$, we focus on the group $G_{(14,3)} \in \Phi_{14}$ as described in~\cite{NewmanO`Brien}.
		\begin{align*}
			G_{(14, 3)} = &\langle \alpha_1, \alpha_2, \alpha_3, \alpha_4, \alpha_5, \alpha_6 : [\alpha_4, \alpha_6]= \alpha_2, [\alpha_3, \alpha_6]=[\alpha_4, \alpha_5]=\alpha_1,\\
			& \alpha_2^p= \alpha_1, \alpha_3^p= \alpha_2, \alpha_4^p= \alpha_3, \alpha_6^p= \alpha_5, \alpha_1^p= \alpha_5^p=1 \rangle\\
			=& \langle \alpha_4, \alpha_6 \rangle.
		\end{align*}
		Two-generator $p$-groups of class $2$ are nested GVZ-groups (see~\cite{Nenciu2generators}). Since groups in $\Phi_{14}$ have class $2$, it follows that $G_{(14,3)}$ is a nested GVZ-group. Hence, by Theorem~\ref{thm:isoGVZ}, every $G \in \Phi_{14}$ is a nested GVZ-group.
		
		Finally, let $G \in \Phi_{13}$. There is a bijection between the sets $\{\bar{\chi} \in \nl(G/K) : C_p \cong K < Z(G)\}$ and $\nl(G)$, where each $\bar{\chi}$ lifts to $\chi \in \nl(G)$. Moreover, $G/K$ is a VZ-group of order $p^5$ (see Lemma~\ref{lemma:Phi13}), and $\cd(G)=\{1,p,p^2\}$. Hence there exists a subgroup $K \cong C_p$ with $K < Z(G)$ such that $G/K$ is a VZ-group of order $p^5$ with $\cd(G/K)=\{1,p\}$. In this case, $\nl(G/K)=p^3 - p^2 = \Irr_p(G)$ (see~\cite[Subsection~4.1]{RJ}), and such a subgroup $K$ is unique. Let $Z(G/K)=H/K$. By Lemma~\ref{lemma:qotient}, every $\chi \in \Irr_p(G)$ is of central type with $Z(\chi)=H$. On the other hand, $\chi \in \Irr_{p^2}(G)$ is of central type with $Z(\chi)=Z(G)$, and we have $Z(G) < H$. Therefore, $G \in \Phi_{13}$ is a nested GVZ-group. This completes the proof of Lemma~\ref{lemma:class2}.
	\end{proof}
	
	Next, we prove Lemma~\ref{lemma:class3}, which shows that $G \in \Phi_{16} \bigcup_{i=22}^{43} \Phi_i$ is not a GVZ-group. Before proving the lemma, we recall some results related to GVZ-groups that will be used in the proof of Lemma~\ref{lemma:class3}. We begin with Lemma~\ref{lemma:nilclassGVZ}.
		\begin{lemma}\cite[Theorem~B]{Burkett}\label{lemma:nilclassGVZ}
		If $G$ is a GVZ-group, then $\nil(G) \leq |\cd(G)|$.
	\end{lemma}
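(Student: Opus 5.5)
The statement is Burkett's Theorem~B: in a GVZ-group the nilpotency class is bounded by the number of distinct character degrees. My plan is to build an ascending chain of normal subgroups out of the character centers $Z(\chi)$, one step per character degree, and to show that this chain is a central series of $G$.

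First I reduce to $p$-groups. A GVZ-group is nilpotent, hence a direct product of its Sylow subgroups. By Lemma~\ref{lemma:direct product} and its proof, each factor is again a GVZ-group, since irreducible characters of a factor extend trivially. Moreover, $\cd$ of each factor injects into $\cd(G)$, and the class of $G$ is the maximum over the factors. So it suffices to treat a GVZ $p$-group $G$ with $\cd(G)=\{d_0=1<d_1<\dots<d_{k-1}\}$, where $k=|\cd(G)|$.

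For $0\le j\le k$ I define
\[
N_j=\bigcap\{Z(\chi):\chi\in\Irr(G),\ \chi(1)\ge d_j\},
\]
with $N_k=G$, and I note $N_0=\bigcap_{\chi}Z(\chi)=Z(G)$. Each $Z(\chi)$ is normal and $Z(\chi)/\ker\chi=Z(G/\ker\chi)$, so every $N_j$ is normal in $G$, and the chain $Z(G)=N_0\le N_1\le\dots\le N_k=G$ is ascending. The goal is to prove $[N_{j+1},G]\le N_j$ for each $j$. Then $G$ has a central series of length at most $k$, and $\nil(G)\le k$ follows.

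The key step is the following claim: if $g\in N_{j+1}$, $x\in G$, and $\chi(1)\ge d_j$, then $[g,x]\in Z(\chi)$. If $\chi(1)\ge d_{j+1}$, the claim is immediate, because $g\in Z(\chi)$ and $Z(\chi)/\ker\chi$ is central in $G/\ker\chi$, so $[g,x]\in\ker\chi$. The real content is the boundary case $\chi(1)=d_j$. Here I would use flatness, which the paper notes is equivalent to GVZ \cite[Theorem~A]{Burkett}: the class of $g$ is the coset $g[g,G]$, and in particular $g$ is conjugate to $g[g,x]$. I would combine this with the GVZ property, which forces $\chi$ to vanish off $Z(\chi)$ and, by \cite[Corollary~2.30]{I}, gives $\chi(1)^2=|G:Z(\chi)|$. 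The argument then splits on whether $g$ lies in $Z(\chi)$.
\begin{itemize}
\item If $g\notin Z(\chi)$, the hope is that $g$'s class, namely the whole coset $g[g,G]$, contains $g\cdot[g,G]$ and so meets elements forcing $\chi(1)$ to be larger than $d_j$. I expect this to yield a contradiction, but I am not sure it works as stated.
\item If $g\in Z(\chi)$, then $[g,x]\in\ker\chi\le Z(\chi)$, and the claim holds.
\end{itemize}

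The main obstacle is precisely this boundary case, which amounts to showing that in a GVZ-group the subgroups $Z(\chi)$ for characters of a given degree sit inside a suitable layer of the upper central series. If the case split above does not close, I would instead try the reverse induction used in Burkett's paper: pass to $G/N$ for a minimal normal $N$, apply induction to the quotient (still GVZ, by Lemma~\ref{lemma:qotient}), and show that either $|\cd|$ drops or the class does not increase. This relies on the fact that characters of maximal degree have the smallest centers, which in turn comes from counting with $\chi(1)^2=|G:Z(\chi)|$ together with the column orthogonality relations.
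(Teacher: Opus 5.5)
Your proposal has a genuine gap. The chain $N_j$ runs in the wrong direction, so the boundary-case claim is false, not merely unproved. (The paper gives no argument of its own here; it simply quotes Burkett's Theorem~B.)

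\textbf{Why the chain fails.} Your $N_j$ intersects the centers of characters of \emph{large} degree. In a GVZ-group these are the \emph{small} centers, since $|Z(\chi)|=|G|/\chi(1)^2$. So the chain barely moves:
\begin{itemize}
\item $N_0=N_1$ always, because linear characters have $Z(\chi)=G$;
\item if some character is fully ramified over $Z(G)$, then already $N_{k-1}=Z(G)$.
\end{itemize}
Concretely, take Lewis's group $G_2=\langle x,y\mid x^{p^3}=y^{p^2}=1,\ y^{-1}xy=x^{1+p}\rangle$ from the proof of Theorem~\ref{thm:existence}. It is a nested GVZ-group with $\cd(G_2)=\{1,p,p^2\}$, $Z(G_2)=\langle x^{p^2}\rangle$, $G_2'=\langle x^p\rangle$, and class $3$. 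Its characters of degree $p^2$ satisfy $\chi(1)^2=|G_2:Z(G_2)|$, so $Z(\chi)=Z(G_2)$ by Lemma~\ref{lemma:Special}. Hence $N_0=N_1=N_2=Z(G_2)$ and $N_3=G_2$. Your claim $[N_3,G_2]\le N_2$ would then say $G_2'\le Z(G_2)$, which is false. In general, at $j=k-1$ your boundary case asserts $G'\le Z(\chi)$ for every $\chi$ of maximal degree, i.e.\ that $G/\ker\chi$ has class at most $2$. No case split using flatness can establish that.

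\textbf{What is needed instead.} Build the chain from characters of \emph{small} degree. Put $Y_i=\bigcap\{Z(\chi):\chi(1)\le d_i\}$, so that $G=Y_0\ge Y_1\ge\cdots\ge Y_{k-1}=Z(G)$. Proving $[Y_i,G]\le Y_{i+1}$ then gives $\gamma_{k+1}(G)=1$. Only $\psi$ with $\psi(1)=d_{i+1}$ needs work, and the idea you are missing is a lemma about $p$-groups with cyclic center.

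\emph{Lemma.} If $H$ is a non-abelian $p$-group with $Z(H)$ cyclic and $M=\Omega_1(Z(H))$, then no $\theta\in\Irr(H/M)$ has $\theta(1)^2=|H:Z(H)|$.

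\emph{Proof sketch.} Such a $\theta$ would satisfy $Z(\theta)=Z(H)$, since $\theta(1)^2\le|H:Z(\theta)|\le|H:Z(H)|$. Pick $h\in Z_2(H)\setminus Z(H)$ and replace it by a suitable $p$-power so that $[h,H]=M\le\ker\theta$. Then $h\in Z(\theta)=Z(H)$, a contradiction.

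\emph{Applying it to the boundary case.} Let $H=G/\ker\psi$; its center $Z(\psi)/\ker\psi$ is cyclic.
\begin{itemize}
\item Each faithful character of $H$ has degree $|H:Z(H)|^{1/2}=\psi(1)$, by the GVZ property.
\item Each non-faithful character of $H$ contains $M$ in its kernel.
\item By the lemma, every character of $H/M$ has degree at most $d_i$.
\end{itemize}
So for $g\in Y_i$, the image of $g$ lies in $\bigcap_{\theta\in\Irr(H/M)}Z(\theta)=Z(H/M)$. Hence $[g,G]\ker\psi/\ker\psi\le M\le Z(H)$, i.e.\ $[g,G]\le Z(\psi)$.

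\textbf{Your fallback induction.} The same lemma is exactly what it lacks.
\begin{itemize}
\item If $Z(G)$ is not cyclic, embed $G$ in $G/N_1\times G/N_2$ with $N_1\cap N_2=1$ and apply induction to both factors.
\item If $Z(G)$ is cyclic, a faithful character has degree $|G:Z(G)|^{1/2}$, and the lemma removes this degree from $\cd(G/M)$. So $\nil(G)\le\nil(G/M)+1\le|\cd(G/M)|+1\le|\cd(G)|$.
\end{itemize}
The observation that characters of maximal degree have the smallest centers does not supply this step.
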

	
	The minimal faithful permutation degree, denoted by $\mu(G)$, of a finite group $G$ is the least positive integer $n$ such that $G$ is isomorphic to a subgroup of the symmetric group $S_n$. Equivalently, $\mu(G)$ is the minimal degree of a faithful permutation representation of $G$, or, in other words, the smallest $n$ for which $G$ admits an embedding into $S_n$.
	
	\begin{lemma}\cite[Corollary~33]{Ayush2}\label{lemma:PermutationRep}
		Let $G$ be a GVZ $p$-group with cyclic center, where $p$ is an odd prime. Then $\mu(G)=|G/Z(G)|^{\frac{1}{2}}|Z(G)|$.
	\end{lemma}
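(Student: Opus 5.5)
The natural route is the classical description of $\mu$ for $p$-groups: $\mu(G)$ is the minimum of $\sum_i |G:H_i|$ over families of subgroups $H_i \le G$ with $\bigcap_i \operatorname{core}_G(H_i)=1$. Since $Z(G)$ is cyclic, $G$ has a unique minimal normal subgroup $\Omega_1(Z(G))$, because every nontrivial normal subgroup of a $p$-group meets $Z(G)$ nontrivially. So a family has trivial core intersection if and only if some $H_i$ misses $\Omega_1(Z(G))$, that is, $H_i \cap Z(G)=1$. Dropping the other members only lowers the sum, so
\[
\mu(G)=\min\{\,|G:H| : H\le G,\ H\cap Z(G)=1\,\}.
\]
It therefore suffices to show that this minimum equals $|G/Z(G)|^{\frac12}|Z(G)|$.

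Next I would fix a suitable character. A $p$-group with cyclic center has a faithful $\chi\in\Irr(G)$. Since $G$ is a GVZ-group, $\chi$ is of central type. Faithfulness forces $Z(\chi)=Z(G)$, so $\chi$ vanishes on $G\setminus Z(G)$. Lemma~\ref{lemma:innerproductrestriction} then gives $\chi(1)^2=|G/Z(G)|$, and $\chi\downarrow_{Z(G)}=\chi(1)\lambda$ for a faithful linear $\lambda\in\Irr(Z(G))$.

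For the lower bound, let $H\cap Z(G)=1$ and put $Z=Z(G)$. Then $HZ=H\times Z$, and $\theta=1_H\times\lambda$ is a linear character of $HZ$. Because $\chi$ vanishes off $Z$, only $Z$ contributes to the inner product:
\[
\langle \chi\downarrow_{HZ},\theta\rangle=\frac{1}{|HZ|}\sum_{z\in Z}\chi(1)\lambda(z)\overline{\lambda(z)}=\frac{\chi(1)}{|HZ:Z|}.
\]
This is a nonnegative integer and it is nonzero, so $|H|=|HZ:Z|$ divides $\chi(1)$. Hence
\[
|G:H|=\frac{|G|}{|H|}\ \ge\ \frac{|G|}{\chi(1)}=\chi(1)\,|Z(G)|.
\]

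For the upper bound I must produce $H$ with $H\cap Z(G)=1$ and $|H|=\chi(1)$; I expect this to be the main obstacle. The plan is to take $H$ maximal among subgroups meeting $Z(G)$ trivially and to assume, for contradiction, that $m=\chi(1)/|H|>1$. The computation above shows that $\theta$ occurs in $\chi\downarrow_{HZ}$ with multiplicity $m$. I would then look at $N_G(HZ)/HZ$, which is nontrivial since $HZ<G$. The aim is to find $x\in N_G(HZ)\setminus HZ$ with $x^p\in HZ$ such that some complement $\langle H,xz\rangle$, with suitable $z\in Z$, still meets $Z$ trivially. This would contradict maximality.

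To find $x$, I would use the GVZ hypothesis on the section $N_G(HZ)/H$ together with Lemma~\ref{lemma:qotient}: the characters of $G$ lying over $\theta$ restrict in a controlled way, and the multiplicity $m>1$ should force a fixed point of $N_G(HZ)$ acting on the $\theta$-eigenspace. If this direct construction resists, my fallback is induction on $|G|$. Pass to $G/\Omega_1(Z(G))$ when $|Z(G)|>p$, or to a suitable maximal subgroup containing $Z(G)$; both are again GVZ $p$-groups with cyclic center. The required $H$ would then be built from the inductive subgroup by lifting and adjusting by central elements.
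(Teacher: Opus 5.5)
Your reduction to
$\mu(G)=\min\{|G:H| : H\cap Z(G)=1\}$ is correct. So is your lower bound: with a faithful $\chi$ vanishing off $Z(G)$, the computation $\langle \chi\downarrow_{HZ},1_H\times\lambda\rangle=\chi(1)/|H|$ gives $\mu(G)\ge |G/Z(G)|^{1/2}|Z(G)|$. For comparison, the paper gives no proof of this lemma; it imports it from \cite[Corollary~33]{Ayush2}.

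\textbf{The gap: the upper bound.} You need a subgroup $H\le G$ with $H\cap Z(G)=1$ and $|H|=|G/Z(G)|^{1/2}$. This is the real content of the lemma, and your sketch does not establish it.

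The sketch cannot work as written, because it never uses that $p$ is odd, and the statement is false for $p=2$. The group $Q_8$ has class $2$, so it is GVZ, and its center $\{\pm 1\}$ is cyclic, so $|G/Z(G)|^{1/2}|Z(G)|=4$. But every nontrivial subgroup of $Q_8$ contains $-1$, so $\mu(Q_8)=8$. In $Q_8$, $H=1$ is maximal with $H\cap Z=1$ and $m=2>1$. Yet $Q_8$ satisfies everything you invoke: GVZ, Lemma~\ref{lemma:qotient}, and $N_G(HZ)>HZ$. So ``maximality plus $m>1$'' cannot yield a contradiction without an ingredient specific to odd $p$. One such ingredient is $(gk)^p=g^pk^p[k,g]^{\binom{p}{2}}=g^pk^p$ when $[k,g]$ is central of order $p$.

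The plan also has internal problems:
\begin{itemize}
\item $N_G(HZ)/H$ is not a section unless $H\trianglelefteq N_G(HZ)$.
\item An element $x\in N_G(HZ)$ need not normalize $H$, so $\langle H,xz\rangle$ need not have order $p|H|$.
\end{itemize}

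\textbf{The fallback induction also fails}, because the class is not closed under the passages you propose.
\begin{itemize}
\item Let $E$ be extraspecial of order $p^3$. Then $G=C_{p^2}\circ E$ is GVZ with $Z(G)\cong C_{p^2}$, yet $G/\Omega_1(Z(G))\cong C_p^3$, whose center is not cyclic.
\item In an extraspecial group of order $p^5$ and exponent $p$, every maximal subgroup containing the center has center $C_p\times C_p$.
\item Subgroups of GVZ-groups need not be GVZ. For example, a group in $\Phi_7$ of order $p^5$ contains subgroups of order $p^4$ and class $3$.
\end{itemize}

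One way to make an induction run is to work in a larger class, e.g.\ $p$-groups with a faithful character fully ramified over a cyclic center. Take $K\trianglelefteq G$ with $|K:Z(G)|=p$. Then $T=C_G(K)$ has index $p$, and $\chi=\psi^G$ with $\psi$ fully ramified over $K$ with respect to an extension $\theta$ of $\lambda$. If $K$ is noncyclic, then $\ker\theta\neq 1$; pass to $T/\ker\theta$ and lift. The remaining case, $K$ cyclic, is where an odd-$p$ input such as the power identity above must enter, to extend a subgroup of $T$ by an element outside $T$.

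Note also that the paper uses this lemma only in Lemma~\ref{lemma:class3}, and there your second paragraph already suffices. A GVZ $p$-group with cyclic center satisfies $|G/Z(G)|=\chi(1)^2$, which is impossible when $|G/Z(G)|=p^5$.
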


	\begin{lemma}\label{lemma:class3}
		Let $G$ be a group of order $p^6$ such that $G \in \Phi_{16} \bigcup_{i=22}^{43} \Phi_i$. Then $G$ is not a GVZ-group.
	\end{lemma}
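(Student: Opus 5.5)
By Theorem~\ref{thm:isoGVZ}, being a GVZ-group is an isoclinism invariant. So for each family $\Phi_i$ with $i=16$ or $22 \le i \le 43$ it suffices to rule out one convenient group, or to use invariants common to the whole family. I would read off these invariants from \cite[Subsection~4.1]{RJ}: the nilpotency class, $\cd(G)$, $G/Z(G)$ and $G'$. The main tool is Lemma~\ref{lemma:nilclassGVZ}: if $\nil(G) > |\cd(G)|$, then $G$ is not a GVZ-group. Families of maximal class (class $5$, with $\cd(G)=\{1,p,p^2\}$ or smaller) are handled immediately. So are most class-$4$ families, where $\cd(G)=\{1,p\}$ or $\{1,p,p^2\}$ with only three degrees, which for class $4$ gives $4>3$. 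Likewise class-$3$ families with $\cd(G)=\{1,p\}$ are excluded. As a first pass, I would tabulate $(\nil(G),|\cd(G)|)$ for each of the families in question and discard every family where the inequality fails.

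The remaining families have $\nil(G)\le|\cd(G)|$, typically class $3$ with $\cd(G)=\{1,p,p^2\}$, and possibly $\Phi_{16}$. For these I would use two further tools.

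\textbf{(a) Cyclic center and Lemma~\ref{lemma:PermutationRep}.} If some group in the family has cyclic center, then being GVZ forces $\mu(G)=|G/Z(G)|^{1/2}|Z(G)|$. First, this requires $|G/Z(G)|$ to be a square. If it is not (e.g.\ $|G/Z(G)|=p^5$ or $p^3$), we already have a contradiction. Otherwise, I would exhibit an explicit faithful permutation representation of smaller degree. The natural construction is a faithful action on cosets of a non-normal subgroup of small index meeting $Z(G)$ trivially, or a union of such coset actions. I would take the presentation from \cite{NewmanO`Brien} for this.

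\textbf{(b) Direct character argument.} Otherwise, I would pick a quotient $G/N$ that is not a GVZ-group. Typical choices are $N$ a central subgroup of order $p$, where $G/N$ is a group of order $p^5$ in one of $\Phi_3,\Phi_6,\Phi_9,\Phi_{10}$, which are non-GVZ by Lemma~\ref{lemma:isoGVZp^5}. By Lemma~\ref{lemma:qotient}, the lift of a non-central-type character of $G/N$ is a non-central-type character of $G$. So it suffices to identify, from the presentation, a normal subgroup $N$ with $G/N$ isoclinic to one of these non-GVZ groups of order $p^5$. For class-$3$ groups the quotient by a suitable central subgroup of order $p$ inside $\gamma_3(G)$ or $Z(G)$ should often still have class $3$ and order $p^5$, hence lie in $\Phi_3,\Phi_6,\Phi_9$ or $\Phi_{10}$. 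This is because, by Lemma~\ref{lemma:isoGVZp^5}, the GVZ families of order $p^5$ are $\Phi_2,\Phi_4,\Phi_5,\Phi_7,\Phi_8$, and class-$3$ groups of order $p^5$ in the GVZ list are restricted.

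The main obstacle is the case-by-case bookkeeping for the families that survive Lemma~\ref{lemma:nilclassGVZ}, especially $\Phi_{16}$ and any class-$3$ families with three character degrees and noncyclic center. For these I must locate the right quotient, or explicitly compute a character that fails to vanish off its center. For the latter, I would pick $\chi\in\Irr_p(G)$ and verify, via Lemma~\ref{lemma:innerproductrestriction}, that $\chi(1)^2\neq|G:Z(\chi)|$.
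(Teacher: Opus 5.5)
Your plan is essentially the paper's proof. Because $|\cd(G)|\le 3$ for every group of order $p^6$, Lemma~\ref{lemma:nilclassGVZ} disposes of every family of class $4$ or $5$ (all of them, not just most) and also of $\Phi_{16}$, which has class $3$ and $\cd(G)=\{1,p\}$; the only survivors, $\Phi_{22},\Phi_{31},\Phi_{32},\Phi_{33},\Phi_{34}$, have $Z(G)\cong C_p$, so your tool (a) finishes at once because $|G/Z(G)|=p^5$ is not a square. The class-$3$ case with noncyclic center that you feared therefore never arises, and tool (b) is not needed. That is just as well, since the ``hence'' in (b) does not follow: a class-$3$ quotient of order $p^5$ can also lie in the GVZ families $\Phi_7$ or $\Phi_8$.
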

	\begin{proof}
		Let $G$ be a group of order $p^6$. Then observe that $|\cd(G)| \leq 3$. Furthermore, if $G \in \bigcup_{i \in \tau} \Phi_i$, where $\tau = \{23, 24, 25, \ldots, 43\} \setminus \{31, 32, 33, 34\}$, then $\nil(G) \in \{4, 5\}$ (see \cite[Subsection~4.1]{RJ}). Moreover, $\nil(G)=3$ and $|\cd(G)|=2$ for $G \in \Phi_{16}$ (see \cite[Subsection~4.1]{RJ}). Hence, Lemma~\ref{lemma:nilclassGVZ} implies that if $G \in \bigcup_{i \in \tau \cup \{16\}} \Phi_i$, then $G$ is not a GVZ $p$-group.
		
		Next, suppose to the contrary that $G$ is a group of order $p^6$ such that 
		$G \in \Phi_{22} \cup \Phi_{31} \cup \Phi_{32} \cup \Phi_{33} \cup \Phi_{34}$ 
		and $G$ is a GVZ $p$-group. Note that $Z(G) \cong C_p$ (see \cite[Subsection~4.1]{RJ}). 
		Thus, $G$ is a GVZ $p$-group with cyclic center. Hence, by Lemma~\ref{lemma:PermutationRep}, we obtain
		$$
		\mu(G) = |G/Z(G)|^{\frac{1}{2}}\,|Z(G)| = p^{\frac{7}{2}}.
		$$
		This implies that $\mu(G)$ is not a positive integer, which contradicts the definition of $\mu(G)$ (see \cite{Ayush1} for the exact value of $\mu(G)$). Therefore, 
		$G \in \Phi_{22} \cup \Phi_{31} \cup \Phi_{32} \cup \Phi_{33} \cup \Phi_{34}$ is not a GVZ $p$-group. This completes the proof of Lemma~\ref{lemma:class3}.
	\end{proof}
	
Lemma~\ref{lemma:Camina} provides a method for determining all irreducible complex characters of groups of order $p^6$ that belong to $\Phi_{21}$. This result plays a crucial role in the proof of the subsequent lemma.
	\begin{lemma}\label{lemma:Camina}
		Let $G$ be a group of order $p^6$ with $G \in \Phi_{21}$. Then we have the following.
		\begin{enumerate}
			\item $\cd(G) = \{1, p, p^2\}$.
			
			\item There is a bijection between the sets $\Irr_{p}(G)$ and $\nl(G/Z(G))$. In particular, every irreducible complex character of $G$ of degree $p$ is the lift of a non-linear irreducible complex character of $G/Z(G)$.
			
			\item The pair $(G, Z(G))$ is a Camina pair. Moreover, there is a bijection between the sets $\Irr_{p^2}(G)$ and $\Irr(Z(G)) \setminus \{1_{Z(G)}\}$, where $1_{Z(G)}$ denotes the trivial character of $Z(G)$.
		\end{enumerate}
	\end{lemma}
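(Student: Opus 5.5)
The argument rests on the invariants of $\Phi_{21}$ recorded in \cite[Subsection~4.1]{RJ}. For $G \in \Phi_{21}$ of order $p^6$ we have $Z(G) \cong C_p$, $|G/Z(G)| = p^5$ and $G/Z(G) \in \Phi_2$ at order $p^5$, and $\cd(G)=\{1,p,p^2\}$; part (1) is read off from these. I would then count characters via
\[
|G| = \sum_{\chi \in \Irr(G)} \chi(1)^2,
\]
splitting $\Irr(G)$ according to whether $Z(G) \subseteq \ker(\chi)$.

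For part (3) I would first show that $(G,Z(G))$ is a Camina pair. The route is to show that every $\chi \in \Irr(G \mid Z(G))$ has degree $p^2$, and then apply Lemma~\ref{lemma:Special}. Lemma~\ref{lemma:Berkovich} gives $\chi(1)^2 \mid p^5$, so $\chi(1) \le p^2$, and equality gives $\chi(1)^2 = p^4 = |G/Z(G)|$ up to the half-integer issue. Since $|G/Z(G)|=p^5$ is not a square, Lemma~\ref{lemma:Special} does not apply verbatim, so I would argue through Lemma~\ref{lemma:innerproductrestriction} instead. Restrict $\chi$ to $Z(G)$: $\chi\downarrow_{Z(G)} = \chi(1)\mu$ with $\mu \ne 1$. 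The Camina property is equivalent to $\chi(1)^2 = |G:Z(G)|$, which is impossible here, so the restriction argument must instead go through a larger subgroup.

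The cleaner approach is to use the known structure directly. In $\Phi_{21}$, \cite{RJ} records that for every $g \in G\setminus Z(G)$ the conjugacy class of $g$ contains the whole coset $gZ(G)$. I would verify this on a representative presentation from \cite{NewmanO`Brien} by checking that $[g,G] \supseteq Z(G)$ for noncentral $g$; since $Z(G) = \gamma_{\nil(G)}(G)$ is of order $p$, it suffices that no noncentral element centralizes $\gamma_{\nil(G)-1}(G)$ modulo the appropriate term. Once the Camina property holds, Lemma~\ref{lemma:Caminacharacter} gives the bijection $\Irr(G\mid Z(G)) \leftrightarrow \Irr(Z(G))\setminus\{1_{Z(G)}\}$, together with the degree $|G/Z(G)|^{1/2}$. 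Here the degree formula must be reconciled, since the characters of $\Irr(G \mid Z(G))$ actually have degree $p^2$ and the formula's $|G/Z(G)|^{1/2}$ is not an integer. Counting then gives
\[
(p-1)p^4 + p^5 = p^6,
\]
where $(p-1)p^4$ comes from the faithful part and $p^5$ from $\Irr(G/Z(G))$. This shows $\Irr(G\mid Z(G)) = \Irr_{p^2}(G)$.

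Part (2) follows from this. Every character of degree $p$ has $Z(G)$ in its kernel, so it is the lift of a nonlinear character of $G/Z(G)$. Conversely, $G/Z(G) \in \Phi_2$ has $\cd = \{1,p\}$, so every nonlinear character of $G/Z(G)$ lifts to a degree-$p$ character of $G$.

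The main obstacle is establishing the Camina/vanishing property for characters nontrivial on $Z(G)$ in $\Phi_{21}$. I would settle it by the explicit commutator computation on one group of the family, and transfer the result to the whole family by isoclinism, since the Camina property of $(G,Z(G))$ is an isoclinism invariant.
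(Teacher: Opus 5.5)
Your argument rests on incorrect invariants for $\Phi_{21}$, and this breaks it. For $G\in\Phi_{21}$ of order $p^6$ one has $Z(G)\cong C_p\times C_p$, $G/Z(G)\cong\Phi_2(1^4)$ of order $p^4$, and $|G'|=p^3$. This is what the paper uses, and the statement itself forces $|G:Z(G)|=p^4$. Indeed, (3) gives $|G:Z(G)|=\chi(1)^2\leq p^4$ for $\chi\in\Irr(G\mid Z(G))$, by the equality case of Lemma~\ref{lemma:innerproductrestriction}. On the other hand, (1) and Lemma~\ref{lemma:Berkovich} give $p^4\mid |G:Z(G)|$.

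With your data ($Z(G)\cong C_p$, $|G:Z(G)|=p^5$) the lemma would be false. Writing $\chi\downarrow_{Z(G)}=\chi(1)\mu$, vanishing off $Z(G)$ would force $\chi(1)^2=p^5$. You notice this yourself, but then assert the Camina property anyway from a ``known structure'' and say the degree formula ``must be reconciled.'' No reconciliation is possible; the contradiction is exactly the sign that the input is wrong. The same error shows up twice more:
\begin{enumerate}
\item Your count $(p-1)p^4+p^5$ equals $2p^5-p^4$, not $p^6$.
\item The reduction in your commutator check relies on $Z(G)$ having order $p$. In fact you would have to show that $\{[g,x]:x\in G\}$ covers a central subgroup of order $p^2$.
\end{enumerate}

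With the correct invariants, your outline essentially becomes the paper's proof. Since $|G/Z(G)|^{1/2}=p^2$, Lemma~\ref{lemma:Special} applies verbatim to every $\chi\in\Irr_{p^2}(G)$. Linear characters are trivial on $Z(G)\leq G'$. The paper excludes degree-$p$ characters nontrivial on $Z(G)$ via the tabulated equality $|\Irr_p(G)|=p^2-p=|\nl(G/Z(G))|$: every nonlinear character of $\Phi_2(1^4)$ has degree $p$, so these lifts exhaust $\Irr_p(G)$. Hence $\Irr(G\mid Z(G))=\Irr_{p^2}(G)$, which gives part (2) and the Camina property. The degree sum $p^6=p^3+p^2(p^2-p)+p^4(p^2-1)$ then gives $|\Irr_{p^2}(G)|=p^2-1$.

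Your alternative route is to check the Camina condition by commutators on an explicit representative and transfer it by isoclinism. That transfer is valid here only because all groups concerned have the same order, so $Z\leq G'$ is preserved. This route would avoid relying on the tabulated count of degree-$p$ characters, but in the proposal it is neither carried out nor set up with the right center.
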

	\begin{proof}
	Suppose $G$ is a group of order $p^6$ with $G \in \Phi_{21}$.
	\begin{enumerate}
		\item This follows immediately (see \cite[Section 4.1]{RJ}).
		
		\item Since $G/Z(G) \cong \Phi_2(1^4)$ for $G \in \Phi_{21}$ (see \cite[Section 4.1]{RJ}), it follows that $G/Z(G)$ is a VZ $p$-group of order $p^4$. In particular, $G$ is of nilpotency class $2$. Moreover, we have
		\[
		|\nl(G/Z(G))| = p^2 - p = |\Irr_p(G)|.
		\]
		Hence, there is a bijection between $\Irr_p(G)$ and $\nl(G/Z(G))$, and every irreducible character of $G$ of degree $p$ arises as the lift of a non-linear irreducible character of $G/Z(G)$.
		
		\item Note that $|G/Z(G)|^{1/2} = p^2$ for $G \in \Phi_{21}$. By Lemma~\ref{lemma:Special}, every $\chi \in \Irr_{p^2}(G)$ is of central type and satisfies $Z(\chi) = Z(G)$. Let $\chi \in \Irr(G \mid Z(G))$. Since $Z(G) \subseteq G'$, part (2) implies that $\chi \in \Irr_{p^2}(G)$. Furthermore, we have
		\[
		|\Irr(Z(G)) \setminus \{1_{Z(G)}\}| = p^2 - 1 = |\Irr_{p^2}(G)|.
		\]
		This establishes a bijection between $\Irr_{p^2}(G)$ and $\Irr(Z(G)) \setminus \{1_{Z(G)}\}$, and hence $(G, Z(G))$ is a Camina pair. The result now follows from Lemma~\ref{lemma:Caminacharacter}. \qedhere
	\end{enumerate}
	\end{proof}
	
	Lemma~\ref{lemma:class4} shows that every group of order $p^6$ belonging to $\Phi_{21}$ is a nested GVZ $p$-group.
	\begin{lemma}\label{lemma:class4}
		Let $G$ be a group of order $p^6$ such that $G \in \Phi_{21}$. Then $G$ is a nested GVZ-group.
	\end{lemma}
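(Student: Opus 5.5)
Lemma~\ref{lemma:Camina} splits $\Irr(G)$ into the three degree classes $1$, $p$, $p^2$. The plan is to show that each class consists of characters of central type with a single common center, and that these three centers form a chain.

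\textbf{Linear characters.} For $\lambda \in \lin(G)$ we have $Z(\lambda)=G$, so $\lambda$ is trivially of central type.

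\textbf{Characters of degree $p$.} By part~(2) of Lemma~\ref{lemma:Camina}, every $\chi \in \Irr_p(G)$ is the lift of some $\bar{\chi} \in \nl(G/Z(G))$. Since $G/Z(G) \cong \Phi_2(1^4)$ is a VZ-group of order $p^4$, the character $\bar{\chi}$ vanishes outside $Z(G/Z(G))$. Also $|Z(G/Z(G))| = p^2$, because $\bar{\chi}(1)^2=p^2=|(G/Z(G))/Z(G/Z(G))|$. Write $Z(G/Z(G)) = Z_2(G)/Z(G)$. Then $Z(\bar{\chi}) = Z_2(G)/Z(G)$ and $\bar{\chi}$ is of central type. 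By Lemma~\ref{lemma:qotient}, $\chi$ is of central type, and from the proof of that lemma, $Z(\chi)=\pi^{-1}(Z(\bar{\chi}))=Z_2(G)$. The key point is that every $\chi\in\Irr_p(G)$ has the same center $Z_2(G)$, which has order $p^4$.

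\textbf{Characters of degree $p^2$.} Since $|G/Z(G)|^{1/2}=p^2$, Lemma~\ref{lemma:Special} (or equivalently the explicit formula of Lemma~\ref{lemma:Caminacharacter} together with part~(3) of Lemma~\ref{lemma:Camina}) shows that every $\chi \in \Irr_{p^2}(G)$ is of central type with $Z(\chi)=Z(G)$. This completes the GVZ property.

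\textbf{Nestedness.} The centers occurring are $G$, $Z_2(G)$ and $Z(G)$, and these satisfy
\[
Z(G) < Z_2(G) < G,
\]
so any two of them are comparable. Hence $G$ is nested.

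The only step needing care is identifying $Z(\chi)$ for the degree-$p$ characters. One must check that the center of a nonlinear character of the VZ-group $\Phi_2(1^4)$ is exactly its center, of order $p^2$, and not something larger; this holds because $\bar{\chi}$ is fully ramified over $Z(G/Z(G))$, so $Z(\bar\chi)=Z(G/Z(G))$. One must also check that its preimage $Z_2(G)$ contains $Z(G)$ properly, which is clear from the orders $p^4$ and $p^2$.
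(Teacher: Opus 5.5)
Your proposal is correct and follows essentially the same route as the paper. The paper also obtains the degree-$p$ characters as lifts from the VZ-group $G/Z(G)\cong\Phi_2(1^4)$, all sharing the center $H$ of order $p^4$ (your $Z_2(G)$), and the degree-$p^2$ characters with center $Z(G)$ of order $p^2$, which gives the chain $Z(G)<Z_2(G)<G$. You also make explicit that $Z(\bar\chi)=Z(G/Z(G))$, a step the paper leaves implicit.
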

	\begin{proof}
			Let $G$ be a group of order $p^6$ with $G \in \Phi_{21}$. Then $\cd(G)=\{1, p, p^2\}$, $Z(G)\cong C_p \times C_p$, and $G/Z(G)\cong \Phi_{2}(1^4)$ (see \cite[Subsection~4.1]{RJ}). By Lemma~\ref{lemma:Camina}, there is a bijection between the sets $\Irr_{p}(G)$ and $\nl(G/Z(G))$. Let $\chi \in \Irr_{p}(G)$ be the lift of $\bar{\chi} \in \nl(G/Z(G))$. Since $G/Z(G)$ is a VZ-group, let $Z(G/Z(G))=H/Z(G)$. Then, by Lemma~\ref{lemma:qotient}, $\chi$ is of central type and satisfies $Z(\chi)=H$.
			
			Furthermore, by Lemma~\ref{lemma:Camina}, the pair $(G, Z(G))$ is a Camina pair. Hence, there is a bijection between the sets $\Irr_{p^2}(G)$ and $\Irr(Z(G)) \setminus \{1_{Z(G)}\}$. For each $1_{Z(G)} \neq \mu \in \Irr(Z(G))$, the corresponding character $\chi_\mu \in \Irr_{p^2}(G)$ is given by
			\[
			\chi_\mu(g) = 
			\begin{cases}
				p^2 \mu(g) & \text{if } g \in Z(G),\\
				0          & \text{otherwise.}
			\end{cases}
			\]
			Thus, we have $Z(\chi_\mu)=Z(G)$, and hence
			\[
			|G/Z(\chi_\mu)|^{\frac{1}{2}} = p^2 = \chi_\mu(1).
			\]
			Therefore, each $\chi_\mu \in \Irr_{p^2}(G)$ is of central type. Moreover, note that $|Z(\chi_\mu)| = |Z(G)| = p^2$ for all $\chi_\mu \in \Irr_{p^2}(G)$, whereas $|Z(\chi)| = p^4$ for all $\chi \in \Irr_{p}(G)$. This shows that $G$ is strictly nested by degrees. Hence, $G$ is a nested GVZ $p$-group. This completes the proof of Lemma~\ref{lemma:class4}.
	\end{proof}
	
	Finally, we conclude this section by proving Lemma~\ref{lemma:class5}, which classifies all GVZ-groups (respectively, nested GVZ-groups) among the remaining isoclinic families. Prior to this, we establish Lemma~\ref{lemma:phi17to20}, which provides a method for determining all irreducible complex characters of groups of order $p^6$ belonging to $\bigcup_{i=17}^{20} \Phi_i$.
	\begin{lemma}\label{lemma:phi17to20}
		Let $G$ be a group of order $p^6$ such that $G \in \Phi_{17} \cup \Phi_{18} \cup \Phi_{19} \cup \Phi_{20}$. Then we have the following.
		\begin{enumerate}
			\item $\cd(G) = \{1, p, p^2\}$.
			\item Each $\chi \in \nl(G)$ is the lift of some $\bar{\chi} \in \nl(G/K)$, where $C_p \cong K < Z(G)$.
			\item There is a bijection between the sets $\{\bar{\chi} \in \nl(G/K \mid Z(G)/K) : C_p \cong K < Z(G)\}$ and $\nl(G\mid Z(G))$, where for $\chi \in \nl(G)$, $\bar{\chi}$ lifts to $\chi$.
		\end{enumerate}
	\end{lemma}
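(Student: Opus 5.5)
Part (1) I take directly from the invariants in \cite[Subsection~4.1]{RJ}, as in Lemmas~\ref{lemma:Phi13} and~\ref{lemma:Camina}. For the remaining parts I use the structural data recorded there for $\Phi_{17},\dots,\Phi_{20}$: $Z(G)\cong C_p\times C_p$, $Z(G)\le G'$, and $|G/Z(G)|=p^4$. The last fact can be read off from part (1) together with Lemma~\ref{lemma:Berkovich}, which forces $p^4$ to divide $|G/Z(G)|$.

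For part (2) I follow the argument of Lemma~\ref{lemma:Phi13}(2). Let $\chi\in\Irr(G)$. Then $\chi\downarrow_{Z(G)}=\chi(1)\lambda$ for some linear $\lambda\in\Irr(Z(G))$. Because $Z(G)$ is elementary abelian of order $p^2$, the kernel $\ker\lambda$ contains a subgroup $K\cong C_p$. Any such $K$ lies in $\ker\chi$, since $\chi$ restricts to a multiple of $\lambda$ on $Z(G)$. Hence $\chi$ is the lift of some $\bar\chi\in\Irr(G/K)$. If $\chi$ is nonlinear then so is $\bar\chi$, and this gives part (2).

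For part (3) I restrict to $\chi\in\nl(G\mid Z(G))$, meaning $Z(G)\not\subseteq\ker\chi$. Then $\lambda\neq1_{Z(G)}$, so $K:=\ker\lambda$ has order exactly $p$ and is uniquely determined by $\chi$. Moreover $\ker\chi\cap Z(G)=K$, so the corresponding $\bar\chi$ lies in $\nl(G/K\mid Z(G)/K)$. This defines a map from $\nl(G\mid Z(G))$ to the disjoint union
\[
\bigsqcup_{K}\nl(G/K\mid Z(G)/K),
\]
taken over the $p+1$ subgroups $K\cong C_p$ of $Z(G)$.

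Conversely, I need to check that the lift of each $\bar\chi\in\nl(G/K\mid Z(G)/K)$ lies in $\nl(G\mid Z(G))$. Since $Z(G)/K\not\subseteq\ker\bar\chi$, the lift does not contain $Z(G)$ in its kernel, so it does. Lifting is injective for a fixed $K$. For distinct $K_1\neq K_2$, a common lift $\chi$ would have $K_1K_2=Z(G)$ inside $\ker\chi$, which is a contradiction. So the two maps are mutually inverse and we get the stated bijection.

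The main obstacle is making sure the displayed set is read as a disjoint union indexed by $K$, so that the correspondence really is a bijection. This is exactly the disjointness argument above. I also need to confirm from \cite{RJ} (or \cite{NewmanO`Brien}) that $Z(G)$ is elementary abelian of order $p^2$ in all four families, since the uniqueness of $K$ depends on it.
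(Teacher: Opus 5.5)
Your proposal is correct and follows essentially the same route as the paper: (1) is read off from James's invariants, (2) comes from the fact that the non-cyclic centre $Z(G)\cong C_p\times C_p$ forces $\ker\chi$ to contain some $K\cong C_p$ with $K<Z(G)$, and (3) is obtained by lifting through these subgroups $K$. Your write-up is more careful than the paper's, which only asserts the cardinality equality in (3) and words (2) as if a single $K$ worked for all $\chi$. You instead give mutually inverse maps via $K=\ker\chi\cap Z(G)$, together with the disjointness argument $K_1K_2=Z(G)$, and this is what makes the correspondence a genuine bijection.
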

	\begin{proof}
	Suppose $G$ is a group of order $p^6$ such that $G \in \Phi_{17} \cup \Phi_{18} \cup \Phi_{19} \cup \Phi_{20}$. Part (1) follows immediately (see \cite[Section 4.1]{RJ}). 
	
	Furthermore, we have $Z(G) \cong C_p \times C_p$ with $Z(G) < G'$. Since $Z(G)$ is not cyclic, there exists a subgroup $K \cong C_p$ with $K < Z(G)$ such that $K \subseteq \ker(\chi)$ for all $\chi \in \nl(G)$. Hence, each $\chi \in \nl(G)$ is the lift of some $\bar{\chi} \in \nl(G/K)$ for some subgroup $K \cong C_p$ with $K < Z(G)$. Therefore, part (2) also follows.
	
	Now assume that $\chi \in \nl(G)$ is such that $Z(G) \subseteq \ker(\chi)$. Then $\chi$ is the lift of a non-linear character of $G/Z(G)$. Consequently, we obtain
	\[
	\left|\left\{ \bar{\chi} \in \nl(G/K \mid Z(G)/K) \,:\, K \cong C_p,\; K < Z(G) \right\}\right| 
	= |\nl(G \mid Z(G))|.
	\]
	This completes the proof.
	\end{proof} 
	
	Lemma~\ref{lemma:class5} classifies all GVZ-groups and nested GVZ-groups of order $p^6$ belonging to $\bigcup_{i=17}^{20} \Phi_i$.
	\begin{lemma}\label{lemma:class5}
		Let $G$ be a group of order $p^6$ such that $G \in \Phi_{17} \cup \Phi_{18} \cup \Phi_{19} \cup \Phi_{20}$. Then $G$ is not a GVZ-group for $G \in \Phi_{17} \cup \Phi_{19} \cup \Phi_{20}$. Furthermore, if $G \in \Phi_{18}$, then $G$ is a GVZ-group but not a nested GVZ-group.
	\end{lemma}
	\begin{proof}
	For this proof, we consider a representative from each isoclinic family and analyze its classification to determine whether the groups in that family are GVZ $p$-groups, nested GVZ $p$-groups, or neither. We use the presentation of the representative group as given in \cite{NewmanO`Brien}. Let $G$ be a group of order $p^6$ such that $G \in \Phi_{17} \cup \Phi_{18} \cup \Phi_{19} \cup \Phi_{20}$.
		
		First, consider $G \in \Phi_{17}$. We show that $G$ has an irreducible character $\chi \in \Irr(G)$ that is not of central type. For instance, we have
		\begin{align*}
			G=G_{(17, 1)}=&\langle \alpha_1, \alpha_2, \alpha_3, \alpha_4, \alpha_5, \alpha_6 : [\alpha_5, \alpha_6]= \alpha_3, [\alpha_4, \alpha_5]=\alpha_2, [\alpha_3, \alpha_6]=\alpha_1,\\ &\alpha_1^p=\alpha_2^p=\alpha_3^p=\alpha_4^p=\alpha_5^p= \alpha_6^p=1 \rangle,
		\end{align*}
		where $p \geq 7$. Then we have $Z(G) = \langle \alpha_1, \alpha_2 \rangle \cong C_p \times C_p$ and $G' = \langle \alpha_1, \alpha_2, \alpha_3 \rangle \cong C_p \times C_p \times C_p$.
		Let $K = \langle \alpha_2 \rangle \cong C_p$. Then we get
		\[
		(G/K)' = \langle \alpha_1 Z(G), \alpha_3 Z(G) \rangle \cong C_p \times C_p
		\]
		and
		\[
		Z(G/K) = \langle \alpha_1 Z(G), \alpha_4 Z(G) \rangle \cong C_p \times C_p.
		\]
		Hence, by Lemma~\ref{lemma:Berkovich}, $\cd(G/K) = \{1,p\}$. Moreover, $G/K$ has nilpotency class $3$, and thus is a group of order $p^5$ belonging to either $\Phi_3$ or $\Phi_6$ (see~\cite[Subsection~4.5]{RJ}). It follows from Lemmas~\ref{lemma:phi17to20}, \ref{lemma:qotient}, and~\ref{lemma:isoGVZp^5} that $G$ admits a nonlinear irreducible character $\chi \in \nl(G)$ which is not of central type. Therefore, $G_{(17,1)}$ is not a GVZ-group, and hence no group in $\Phi_{17}$ is a GVZ-group by Theorem~\ref{thm:isoGVZ}.
		
		Next, let $G \in \Phi_{19}$. Again, we exhibit a nonlinear irreducible character that is not of central type. For instance, we have
			\begin{align*}
			G=G_{(19, 1)}=&\langle \alpha, \alpha_1, \alpha_2, \beta, \beta_1, \beta_2 : [\alpha_1, \alpha_2]=\beta, [\beta, \alpha_1]= \beta_1, [\beta, \alpha_2]= \beta_2, [\alpha, \alpha_1]=\beta_1,\\ &\alpha^p=\alpha_1^p=\alpha_2^p=\beta^p=\beta_1^p=\beta_2^p=1 \rangle,
		\end{align*}
		where $p \geq 7$. Then we have $Z(G) = \langle \beta_1, \beta_2 \rangle \cong C_p \times C_p$ and $G' = \langle \beta, \beta_1, \beta_2 \rangle \cong C_p \times C_p \times C_p$. Let $K = \langle \beta_1 \rangle \cong C_p$. Then
		\[
		(G/K)' = \langle \beta Z(G), \beta_2 Z(G) \rangle \cong C_p \times C_p
		\]
		and
		\[
		Z(G/K) = \langle \alpha Z(G), \beta_2 Z(G) \rangle \cong C_p \times C_p.
		\]
		Hence, we get $\cd(G/K) = \{1,p\}$, and $G/K$ has nilpotency class $3$. Thus, $G/K \in \Phi_3 \cup \Phi_6$. Arguing as in the previous case, we conclude that $G \in \Phi_{19}$ is not a GVZ-group.
		
		Now let $G \in \Phi_{20}$. For example, we have
		\begin{align*}
			G=G_{(20, 1)}=&\langle \alpha_1, \alpha_2, \alpha_3, \alpha_4, \alpha_5, \alpha_6 : [\alpha_5, \alpha_6]= \alpha_3, [\alpha_4, \alpha_6]=\alpha_1^{-1}, [\alpha_3, \alpha_6]=\alpha_2, [\alpha_3, \alpha_5]=\alpha_1,\\ &\alpha_1^p=\alpha_2^p=\alpha_3^p=\alpha_4^p=\alpha_5^p= \alpha_6^p=1 \rangle,
		\end{align*}
		where $p \geq 7$. Then we have $Z(G) = \langle \alpha_1, \alpha_2 \rangle \cong C_p \times C_p$ and $G' = \langle \alpha_1, \alpha_2, \alpha_3 \rangle \cong C_p \times C_p \times C_p$. Let $K = \langle \alpha_1 \rangle \cong C_p$. Then
		\[
		(G/K)' = \langle \alpha_2 Z(G), \alpha_3 Z(G) \rangle \cong C_p \times C_p
		\]
		and
		\[
		Z(G/K) = \langle \alpha_2 Z(G), \alpha_4 Z(G) \rangle \cong C_p \times C_p.
		\]
		Thus, we have $\cd(G/K) = \{1,p\}$ and $G/K$ has nilpotency class $3$. So $G/K \in \Phi_3 \cup \Phi_6$. Hence, as before, $G \in \Phi_{20}$ is also not a GVZ-group.
		
		Finally, consider $G \in \Phi_{18}$. We show that $G$ is a GVZ-group but not nested. For $p \geq 7$, we have
	\begin{align*}
		G=G_{(18, 1)}=&\langle \alpha_1, \alpha_2, \alpha_3, \alpha_4, \alpha_5, \alpha_6 : [\alpha_5, \alpha_6]= \alpha_3, [\alpha_4, \alpha_6]=\alpha_2, [\alpha_3, \alpha_6]=[\alpha_4, \alpha_5]=\alpha_1,\\ &\alpha_1^p=\alpha_2^p=\alpha_3^p=\alpha_4^p=\alpha_5^p= \alpha_6^p=1 \rangle.
	\end{align*}
		Then we have $Z(G) = \langle \alpha_1, \alpha_2 \rangle \cong C_p \times C_p$ and $G' = \langle \alpha_1, \alpha_2, \alpha_3 \rangle \cong C_p \times C_p \times C_p$.
		Let $K = \langle \alpha_1 \rangle \cong C_p$. Then we get
		\[
		(G/K)' = Z(G/K) = \langle \alpha_2 Z(G), \alpha_3 Z(G) \rangle \cong C_p \times C_p.
		\]
		Thus, $G/K$ is a group of order $p^5$ and class $2$, so $G/K \in \Phi_4$. Hence, we have $\cd(G/K) = \{1,p\}$ and
		\[
		|\nl(G/K)| = p^3 - p = |\Irr_p(G)|.
		\]
		It follows that each $\chi \in \Irr_p(G)$ is the lift of some $\bar{\chi} \in \nl(G/K)$. Since $G/K$ is a GVZ-group (see Lemma~\ref{lemma:isoGVZp^5}), each such $\chi$ is of central type by Lemma~\ref{lemma:qotient}. Therefore, $G_{(18,1)}$ is a GVZ-group, and hence so is every group in $\Phi_{18}$. Moreover, $|G/Z(G)|^{\frac{1}{2}} = p^2$, so every $\chi \in \Irr_{p^2}(G)$ is of central type with $Z(\chi) = Z(G)$ by Lemma~\ref{lemma:Special}. However, since $G/K$ is not a nested GVZ-group, there exist $\bar{\chi}_1, \bar{\chi}_2 \in \nl(G/K)$ such that $Z(\bar{\chi}_1) \neq Z(\bar{\chi}_2)$. Their lifts $\chi_1, \chi_2 \in \Irr_p(G)$ then satisfy $Z(\chi_1) \neq Z(\chi_2)$, showing that $G$ is not nested. Hence, groups in $\Phi_{18}$ are GVZ but not nested. This completes the proof of Lemma~\ref{lemma:class5}.
	\end{proof}

	\section{Proof of the main results}
	In this section, we present the proof of Theorem~\ref{thm:isoGVZp^6} and Corollary~\ref{coro:GVZcounting}, along with some immediate consequences of Theorem~\ref{thm:isoGVZp^6}.
	\begin{proof}[Proof of Theorem~\ref{thm:isoGVZp^6}]
		Let $G$ be a finite $p$-group of order at most $p^6$, where $p$ is an odd prime. By definition, every abelian group is a nested GVZ-group. In particular, the isoclinism family $\Phi_1$ consists precisely of the abelian groups of order $p^n$ for $1 \leq n \leq 6$.
		
		For groups of order $p^3$, all non-abelian groups belong to $\Phi_2$ and are nested GVZ $p$-groups. For groups of order $p^4$, there are three isoclinism families; those of nilpotency class $2$ lie in $\Phi_2$, while those of class $3$ lie in $\Phi_3$. Moreover, a group of order $p^4$ is a GVZ $p$-group if and only if it has nilpotency class $2$. In this case, it is in fact a nested GVZ $p$-group.
		
		There are $10$ isoclinism families of groups of order $p^5$, denoted $\Phi_i$ for $1 \leq i \leq 10$. A group $G$ of order $p^5$ is a GVZ-group if and only if $G \in \Phi_{1} \cup \Phi_{2} \cup \Phi_{4} \cup \Phi_{5} \cup \Phi_{7} \cup \Phi_{8}$, and $G$ is a nested GVZ-group if and only if $G \in \Phi_{1} \cup \Phi_{2} \cup \Phi_{5} \cup \Phi_{7} \cup \Phi_{8}$ (see Lemma~\ref{lemma:isoGVZp^5}).
		
		Finally, there are $43$ isoclinism families of groups of order $p^6$, denoted $\Phi_i$ for $1 \leq i \leq 43$. The result now follows by combining the results from Lemmas~\ref{lemma:class1}, \ref{lemma:class2}, \ref{lemma:class3}, \ref{lemma:class4}, and~\ref{lemma:class5}, thereby completing the proof of Theorem~\ref{thm:isoGVZp^6}.
	\end{proof}
	
	\begin{corollary}
		Let $G$ be a finite $p$-group of order at most $p^6$, where $p$ is an odd prime. Then $G$ is a VZ $p$-group if and only if $G \in \Phi_{2} \cup \Phi_{5} \cup \Phi_{15}$.
	\end{corollary}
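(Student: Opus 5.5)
Since a VZ-group is by definition non-abelian with $(G,Z(G))$ a generalized Camina pair, and since such groups are nested GVZ-groups (as recalled in the introduction, \cite{FM}), the plan is to work inside the list of nested GVZ families given by Theorem~\ref{thm:isoGVZp^6}. The first step is to check that being a VZ-group is an isoclinism invariant. An isoclinism $(\theta,\phi)$ identifies $G/Z(G)$ with $H/Z(H)$ and $G'$ with $H'$ compatibly with commutators. It is known that $G$ is a VZ-group if and only if the nonlinear characters vanish off $Z(G)$, and this is equivalent to the commutator condition $\{[g,x]:x\in G\}=G'$ for every $g\notin Z(G)$ (the flat/Camina-type characterization: $g$ is conjugate to all of $gG'$). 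That condition is expressed purely through the commutator map $a_G$, so it transfers along $(\theta,\phi)$. Consequently it suffices to test one representative of each of the nested GVZ families $\Phi_2,\Phi_5,\Phi_7,\Phi_8,\Phi_{13},\Phi_{14},\Phi_{15},\Phi_{21}$. Non-nested or non-GVZ families are excluded automatically, and $\Phi_1$ is excluded because it consists of abelian groups.

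The second step handles the positive cases. For $\Phi_2$ (extraspecial-like, with $|G'|=p$), $(G,Z(G))$ is a generalized Camina pair, as already used in the proof of Lemma~\ref{lemma:Phi13}. For $\Phi_{15}$, Lemma~\ref{lemma:class2} already gives that $G$ is a VZ-group. For $\Phi_5$, the RJ invariants give $\cd(G)=\{1,p^2\}$ and $|G/Z(G)|=p^4$, so Lemma~\ref{lemma:Special} shows every nonlinear character vanishes off $Z(G)$.

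The third step handles the exclusions. A VZ-group has exactly two character degrees, namely $1$ and $|G/Z(G)|^{1/2}$, because every $\chi\in\nl(G)$ is fully ramified over $Z(G)$. Any family with $|\cd(G)|=3$ is therefore excluded. This covers $\Phi_{13}$ and $\Phi_{21}$ by Lemmas~\ref{lemma:Phi13} and~\ref{lemma:Camina}, and also $\Phi_7$ and $\Phi_8$ if their degree sets have size $3$. For two-degree families I would compare the nonlinear degree with $|G/Z(G)|^{1/2}$. For example, $\Phi_{14}$ has $G/Z(G)$ of order $p^4$ but, as a $2$-generator class-$2$ group, it has characters of degree $p$. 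Similarly $\Phi_8$ is $2$-generator with nonlinear degree $p$ but $|G/Z(G)|=p^4$. In each such case the existence of a nonlinear degree smaller than $|G/Z(G)|^{1/2}$ rules out the VZ property.

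The main obstacle is the bookkeeping for $\Phi_7$ and $\Phi_8$ (order $p^5$) and $\Phi_{14}$. Here I must read off $\cd(G)$ and $|G/Z(G)|$ from \cite[Subsection~4.1]{RJ} and confirm that some nonlinear degree is strictly less than $|G/Z(G)|^{1/2}$, or that $|\cd(G)|=3$. If a case is unclear, I would fall back on class: VZ-groups have class $2$, and $\Phi_7$ and $\Phi_8$ have class $3$, which settles them immediately.
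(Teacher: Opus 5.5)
Your proposal is correct and follows essentially the paper's route: you use that VZ-groups are nested GVZ to restrict to the nested families of Theorem~\ref{thm:isoGVZp^6}, then filter by nilpotency class and $|\cd(G)|$, where VZ forces class $2$ and $\cd(G)=\{1,|G/Z(G)|^{1/2}\}$. The differences are minor. You confirm $\Phi_2,\Phi_5,\Phi_{15}$ directly, via $|G'|=p$, Lemma~\ref{lemma:Special} and Lemma~\ref{lemma:class2}, whereas the paper uses its observation that a nested GVZ-group of class $2$ with two character degrees is VZ; and your isoclinism-invariance step is harmless but unnecessary, since your arguments rely only on family invariants.
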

	\begin{proof}
		The proof follows immediately from Theorem~\ref{thm:isoGVZp^6}, together with the observation that any nested GVZ-group of nilpotency class $2$ satisfying $|\cd(G)|=2$ is necessarily a VZ-group.
	\end{proof}
	
	We now proceed to the proof of Corollary~\ref{coro:GVZcounting}.
	\begin{proof}[Proof of Corollary~\ref{coro:GVZcounting}]
		A corrected classification of groups of order $p^6$ is given in~\cite{O'Brien}. Moreover, the authors determine the number of isomorphism types within each isoclinism family $\Phi_i$ for $1 \leq i \leq 43$. The result therefore follows from Theorem~\ref{thm:isoGVZp^6} together with~\cite[Table~2]{O'Brien}.
	\end{proof}
	
	Note that the classification of groups of order $p^5$ given in~\cite{RJ} for an odd prime $p$ is correct. Next, we have the following corollary, which is an immediate consequence of Theorem~\ref{thm:isoGVZp^6} together with~\cite[Subsection~4.5]{RJ}.
	\begin{corollary}
		Let $p$ be an odd prime. Then the number of isomorphism types of GVZ $p$-groups of order $p^5$ is $p+31$, and the number of isomorphism types of nested GVZ $p$-groups of order $p^5$ is $23$.
	\end{corollary}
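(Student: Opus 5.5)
This count should follow the same pattern as the proof of Corollary~\ref{coro:GVZcounting}, with the order-$p^5$ classification of~\cite[Subsection~4.5]{RJ} (which is correct) taking the place of~\cite[Table~2]{O'Brien}. By Theorem~\ref{thm:isoGVZp^6}, or directly by Lemma~\ref{lemma:isoGVZp^5} together with the fact that $\Phi_1$ is exactly the abelian groups, a group of order $p^5$ is GVZ if and only if it lies in $\Phi_1\cup\Phi_2\cup\Phi_4\cup\Phi_5\cup\Phi_7\cup\Phi_8$. It is nested GVZ if and only if it lies in $\Phi_1\cup\Phi_2\cup\Phi_5\cup\Phi_7\cup\Phi_8$. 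Since isoclinism families partition the groups of order $p^5$, each count is a sum of the sizes of the relevant families. The plan is therefore to read off from~\cite[Subsection~4.5]{RJ} the number of isomorphism types of order $p^5$ in each of $\Phi_1,\Phi_2,\Phi_4,\Phi_5,\Phi_7,\Phi_8$ and add them.

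Concretely, I would tabulate as follows.
\begin{itemize}
\item $\Phi_1$ contributes $7$ groups, one for each partition of $5$.
\item For $\Phi_2$, I would list James' groups $\Phi_2(311)a,\ldots$ of order $p^5$; these should number $15$.
\item $\Phi_4$ should contribute $p+8$ groups, this being the only family among these six whose count depends on $p$.
\item $\Phi_5$ contributes $2$ groups, namely $\Phi_5(2111)$ and $\Phi_5(1^5)$.
\item $\Phi_7$ contributes $5$ groups.
\item $\Phi_8$ contributes $1$ group, $\Phi_8(32)$.
\end{itemize}
The GVZ total is then $7+15+(p+8)+2+5+1$. The nested total is the same sum without the $\Phi_4$ term, $7+15+2+5+1$.

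As a consistency check, I would compare against the known total of $2p+61+2\gcd(p-1,3)+\gcd(p-1,4)$ groups of order $p^5$ (valid for $p\ge5$). I would also confirm that the gcd-dependent families ($\Phi_6$, $\Phi_{10}$, and so on) are exactly the ones excluded, which explains why the final formulas involve no gcd terms.

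The main obstacle is bookkeeping: extracting the correct family sizes from James' lists. For $\Phi_4$ in particular, some groups come in parametrized series $\Phi_4(221)b_r$ and similar, and the number of such groups depends on residue classes. I would verify that this dependence collapses to exactly $p+8$ for every odd $p$. The case $p=3$ needs separate care, since James' list is uniform only for $p\ge 5$ in some families. I would check it against the \textsc{SmallGroups} library, where the claimed counts give $34$ and $23$ for order $3^5$.
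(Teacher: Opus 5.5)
Your route is the paper's route: the paper's proof is the single remark that the count follows from Theorem~\ref{thm:isoGVZp^6} and the lists in~\cite[Subsection~4.5]{RJ}. Your family sizes $7,\,15,\,p+8,\,2,\,5,\,1$ for $\Phi_1,\Phi_2,\Phi_4,\Phi_5,\Phi_7,\Phi_8$ are also correct. The gap is that you never evaluate your two sums, and they do not give the numbers in the statement:
\[
7+15+(p+8)+2+5+1=p+38, \qquad 7+15+2+5+1=30 .
\]
The stated values come out exactly when the seven abelian groups of $\Phi_1$ are dropped:
\[
15+(p+8)+2+5+1=p+31, \qquad 15+2+5+1=23 .
\]
So the corollary is implicitly counting \emph{non-abelian} GVZ and nested GVZ groups. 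This matches Lemma~\ref{lemma:isoGVZp^5}, which is stated for non-abelian $G$. Your argument, following Theorem~\ref{thm:isoGVZp^6}, rightly includes $\Phi_1$, since abelian groups are nested GVZ. As written, then, it proves counts that differ from the claim by $7$.

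To close the gap, say explicitly that you are counting non-abelian groups and omit the $\Phi_1$ term; then your tabulation proves the statement. If abelian groups are meant to be included, your computation shows the correct counts are $p+38$ and $30$, and the corollary as literally worded is off by $7$. Your proposed check at $p=3$ has the same problem: with $\Phi_1$ included, the totals are $7$ larger than the non-abelian counts. So matching $34$ and $23$ in \textsc{SmallGroups} requires excluding the abelian groups.

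A minor correction to your consistency check: $\Phi_6$ is not gcd-dependent; it contributes $p+7$ groups. The gcd terms in $2p+61+2\gcd(p-1,3)+\gcd(p-1,4)$ come only from $\Phi_9$, with $2+\gcd(p-1,3)$ groups, and $\Phi_{10}$, with $1+\gcd(p-1,3)+\gcd(p-1,4)$ groups. Both are excluded anyway, together with $\Phi_3$, which has $13$ groups.
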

	
	We conclude this section with Theorem~\ref{thm:existence}, which describes the range of nilpotency classes of GVZ $p$-groups of order $p^n$ and establishes the existence of a nested GVZ $p$-group of order $p^n$ for each nilpotency class within that range.
	 \begin{theorem}\label{thm:existence}
	 	Let $G$ be a GVZ $p$-group of order $p^n$, where $p$ is an odd prime. Then the nilpotency class of $G$ satisfies
	 	\[
	 	\nil(G) \leq \left\lceil \frac{n}{2} \right\rceil,
	 	\]
	 	where $\lceil \cdot \rceil$ denotes the ceiling (least integer) function. Moreover, this bound is sharp in the class of GVZ $p$-groups of order $p^n$. In particular, for each integer $c$ with
	 	\[
	 	1 \leq c \leq \left\lceil \frac{n}{2} \right\rceil,
	 	\]
	 	there exists a nested GVZ $p$-group $G$ of order $p^n$ such that $\nil(G) = c$.
	 \end{theorem}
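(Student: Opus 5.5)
The plan is to get the upper bound from Lemma~\ref{lemma:nilclassGVZ} together with a bound on $|\cd(G)|$, and then to show sharpness using the known families of nested GVZ-groups, padded by abelian direct factors.

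\textbf{Upper bound.} By Lemma~\ref{lemma:nilclassGVZ}, a GVZ-group satisfies $\nil(G)\le|\cd(G)|$, so it suffices to show $|\cd(G)|\le\lceil n/2\rceil$ when $G$ is a non-abelian $p$-group of order $p^n$. If $G$ is abelian, then $\nil(G)=1$ and there is nothing to prove.
\begin{itemize}
\item The character degrees are powers of $p$, say $p^e$.
\item By Lemma~\ref{lemma:Berkovich}, $p^{2e}$ divides $|G/Z(G)|$.
\item For non-abelian $G$, $G/Z(G)$ is not cyclic, so $|Z(G)|\ge p$ and $|G/Z(G)|\le p^{n-1}$, which already needs $n\ge 3$.
\end{itemize}
Hence $2e\le n-1$, that is, $e\le\lfloor (n-1)/2\rfloor$. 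The possible degrees are therefore $1,p,\dots,p^{\lfloor (n-1)/2\rfloor}$, so $|\cd(G)|\le\lfloor (n-1)/2\rfloor+1=\lceil n/2\rceil$. This gives $\nil(G)\le\lceil n/2\rceil$.

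\textbf{Sharpness and existence.} Fix $c$ with $1\le c\le\lceil n/2\rceil$.
\begin{itemize}
\item For $c=1$, take the cyclic group $C_{p^n}$.
\item For $c\ge2$, write $c=m+1$ with $m\ge1$. Then $2m+1\le n$: when $n$ is odd this is $m\le (n-1)/2$, and when $n$ is even $c\le n/2$ gives $2m+1\le n-1$.
\end{itemize}
For this $m$, the constructions cited in the introduction give a nested GVZ $p$-group $H$ of order $p^{2m+1}$ and class $m+1$. Lewis~\cite{LewisGVZ2} provides such groups for every odd prime $p$ (exponent $p^{m+1}$). Nenciu's groups~\cite{NenciuGVZ2} also work, but they require $p>m+1$. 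Now set $G=H\times C_{p^{\,n-2m-1}}$; the cyclic factor is trivial when $n=2m+1$. Then $|G|=p^n$ and $\nil(G)=\nil(H)=c$. Since $G$ is isoclinic to $H$, Lemma~\ref{lemma:direct product} and Theorem~\ref{thm:isoGVZ} show that $G$ is a nested GVZ-group. Taking $c=\lceil n/2\rceil$ shows the bound is attained.

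\textbf{Main obstacle.} The delicate step is the existence claim, because it rests entirely on the external constructions. We must confirm that Lewis's family really covers every odd $p$ and every $m\ge1$, and in particular the case $p=3$, where Nenciu's condition $p>m+1$ fails once $m\ge2$. If that citation has a gap, the fallback for small $c$ is the explicit order-$p^6$ examples from Theorem~\ref{thm:isoGVZp^6}. Those only cover $n\le 6$, however, so in general the citation to~\cite{LewisGVZ2} has to carry the argument.
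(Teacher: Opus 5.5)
Your proposal is correct and matches the paper's argument. Lemma~\ref{lemma:Berkovich} bounds the degrees by $p^{\lceil n/2\rceil-1}$, and Lemma~\ref{lemma:nilclassGVZ} turns $|\cd(G)|\le\lceil n/2\rceil$ into the class bound. Sharpness comes from $G_m\times C_{p^{n-2m-1}}$, where Lewis's metacyclic groups $G_m=\langle x,y\mid x^{p^{m+1}}=y^{p^m}=1,\ y^{-1}xy=x^{1+p}\rangle$ are nested GVZ of order $p^{2m+1}$ and class $m+1$. These exist for every odd $p$ and every $m\ge1$, so your worry about $p=3$ does not arise.

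One slip in justification: $|Z(G)|\ge p$ holds because $G$ is a nontrivial $p$-group, not because $G/Z(G)$ is non-cyclic.
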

	 \begin{proof}
	 	Let $G$ be a GVZ $p$-group of order $p^n$, where $p$ is an odd prime. Let $\chi \in \Irr(G)$. Then we have
	 	\[
	 	\chi(1) \leq p^{\left\lceil \frac{n}{2} \right\rceil - 1}
	 	\]
	 	(see Lemma~\ref{lemma:Berkovich}). Hence, observe that
	 	\[
	 	|\cd(G)| \leq \left\lceil \frac{n}{2} \right\rceil.
	 	\]
	 	Therefore, from Lemma~\ref{lemma:nilclassGVZ}, we obtain
	 	\[
	 	\nil(G) \leq \left\lceil \frac{n}{2} \right\rceil.
	 	\]
	 	
	 	Furthermore, for each positive integer $m$, there exists a nested GVZ $p$-group
	 	\begin{equation*}\label{presentation:GVZ2}
	 		G_m = \langle x, y \mid x^{p^{m+1}} = y^{p^m} = 1,\; y^{-1} x y = x^{1+p} \rangle
	 	\end{equation*}
	 	of order $p^{2m+1}$ and nilpotency class $m+1$ (see~\cite[Example~3]{LewisGVZ2}). Hence, for each integer $c$ with
	 	\[
	 	2 \leq c \leq \left\lceil \frac{n}{2} \right\rceil,
	 	\]
	 	we set $m = c - 1$ and define
	 	\[
	 	G = G_m \times C_{p^{\,n - 2m - 1}},
	 	\]
	 	where $C_{p^{\,n - 2m - 1}}$ denotes a cyclic group of order $p^{n - 2m - 1}$. By Lemma~\ref{lemma:direct product}, $G$ is a nested GVZ $p$-group of order $p^n$ and nilpotency class $c$. This completes the proof of Theorem~\ref{thm:existence}.
	 \end{proof}

	\section*{Acknowledgments}
	The author gratefully acknowledges financial support from the IISER Pune Postdoctoral Fellowship, and thanks S.~K. Prajapati and A.~Udeep for valuable discussions and insights during the preparation of the manuscript.


\end{document}